\newcommand{\cmark}{\ding{51}}%
\newcommand{\xmark}{\ding{55}}%
\newtheorem{definition}{Definition}
\newtheorem{theorem}{Theorem}
\newtheorem*{theorem*}{Theorem}
\newtheorem{lemma}{Lemma}
\newtheorem{fact}{Fact}
\newtheorem{example}{Example}
\newtheorem{assumption}{Assumption}
\newtheorem{proposition}{Proposition}
\newtheorem{remark}{Remark}
\DeclareMathOperator*{\argmin}{argmin}
\DeclareMathOperator{\Graph}{Gra}
\DeclareMathOperator{\Zero}{Zer}
\newcommand{\notshow}[1]{{}}
\newcommand{\AutoAdjust}[3]{{ \mathchoice{ \left #1 #2  \right #3}{#1 #2 #3}{#1 #2 #3}{#1 #2 #3} }}
\newcommand{\Xcomment}[1]{{}}
\newcommand{\InParentheses}[1]{\AutoAdjust{(}{#1}{)}}
\newcommand{\InBrackets}[1]{\AutoAdjust{[}{#1}{]}}
\newcommand{\InAngles}[1]{\AutoAdjust{\langle}{#1}{\rangle}}
\newcommand{\InNorms}[1]{\AutoAdjust{\|}{#1}{\|}}
\renewcommand{\part}[2]{\frac{\partial #1}{\partial #2}}
\newcommand{\R}{\mathbbm{R}}
\newcommand{\Z}{\mathcal{Z}}
\newcommand{\half}{\frac{1}{2}}
\newcommand{\ind}{\mathbbm{1}}
\newcommand{\indSet}{\mathbbm{I}}
\newcommand{\gap}{\textsc{Gap}}
\newcommand{\LHSI}{\text{LHS of Inequality}}
\title{Accelerated Single-Call Methods for Constrained Min-Max Optimization}
\author{Yang Cai \\
Yale University\\
\texttt{yang.cai@yale.edu} \\
\And
Weiqiang Zheng \\
Yale University\\
\texttt{weiqiang.zheng@yale.edu}
}
\begin{document}

\maketitle

\begin{abstract}
   We study first-order methods for constrained min-max optimization.
   Existing methods either require two gradient calls or two projections in each iteration, which may be costly in some applications. In this paper, we first show that a variant of the \emph{Optimistic Gradient (OG)} method, a \emph{single-call single-projection} algorithm,  has $O(\frac{1}{\sqrt{T}})$ best-iterate convergence rate for inclusion problems with operators that satisfy the weak Minty variation inequality (MVI). 
   Our second result is the first single-call single-projection algorithm --
   the \emph{Accelerated Reflected Gradient (ARG)} method that achieves the \emph{optimal $O(\frac{1}{T})$} last-iterate convergence rate for inclusion problems that satisfy negative comonotonicity. Both the weak MVI and negative comonotonicity are well-studied assumptions and capture a rich set of non-convex non-concave min-max optimization problems. 
   Finally, we show that the \emph{Reflected Gradient (RG)} method, another \emph{single-call single-projection} algorithm,  has $O(\frac{1}{\sqrt{T}})$ last-iterate convergence rate for constrained convex-concave min-max optimization,  answering an open problem of \citep{hsieh_convergence_2019}. Our convergence rates hold for standard measures such as the tangent residual and the natural residual. 
\end{abstract}

\section{Introduction}
Various Machine Learning applications, from the generative adversarial networks (GANs) (e.g.,~\citep{goodfellow_generative_2014,arjovsky_wasserstein_2017}), adversarial examples (e.g.,~\citep{madry_towards_2017}), robust optimization (e.g.,~\citep{ben-tal_robust_2009}), to reinforcement learning (e.g.,~\citep{du_stochastic_2017,dai_sbeed_2018}), can be captured by constrained min-max optimization. Unlike the well-behaved convex-concave setting, these modern ML applications often require solving non-convex non-concave min-max optimization problems in high dimensional spaces. 

Unfortunately, the general non-convex non-concave setting is intractable even for computing a \emph{local} solution~\citep{hirsch_exponential_1989,papadimitriou_complexity_1994, daskalakis2021complexity}. Motivated by the intractability, researchers turn their attention to non-convex non-concave settings \emph{with structure}. Significant progress has been made for several interesting structured non-convex non-concave settings, such as the ones that satisfy the weak Minty variation inequality (MVI) (Definition~\ref{def:weak MVI})~\citep{diakonikolas2021efficient,pethick2022escaping} and the ones that satisfy the more strict negatively comonotone condition (Definition~\ref{def:comonotone})~\citep{lee2021fast,cai2022accelerated}.
These algorithms are variations of the celebrated extragradient \eqref{eq:EG} method~\citep{korpelevich_extragradient_1976}, an iterative first-order method. Similar to the extragradient method, these algorithms all require \emph{two oracle calls} per iteration, which may be costly in practice. We investigate the following important question in this paper:
\begin{align*}\label{eq:question star}
    \emph{Can we design efficient} &\emph{\textbf{ single-call }} \emph{first-order methods for}
    \\
 &\emph{structured non-convex non-concave min-max optimization?}\tag{*}
\end{align*}

We provide an affirmative answer to the question. We first show that a single-call method known as the \emph{Optimistic Gradient~\eqref{OG}} method~\citep{hsieh_convergence_2019} is applicable to all non-convex non-concave settings that satisfy the \emph{weak MVI}. We then provide the \emph{Accelerated Reflected Gradient~\eqref{eq:ARG}} method that achieves the optimal convergence rate in all non-convex non-concave settings that satisfy the \emph{negatively comonotone condition}. Single-call methods have been studied in the convex-concave settings~\citep{hsieh_convergence_2019} but not for the more general non-convex non-concave settings. See Table~\ref{tab:table} for comparisons between our algorithms and other algorithms from the literature.

\begin{table}[ht]
\begin{adjustbox}{width=\textwidth}
\begin{tabular}{cccccc}
\hline
\multicolumn{2}{c}{\multirow{2}{*}{Algorithm}} &
  \multirow{2}{*}{1-Call?} &
  \multirow{2}{*}{Constraints?} &
  \multicolumn{2}{c}{Non-Monotone} \\ \cline{5-6} 
\multicolumn{2}{c}{}                                              &        &        & \multicolumn{1}{l}{Comonotone} & \multicolumn{1}{l}{weak MVI} \\ \hline
Normal &
  EG+~\citep{diakonikolas2021efficient} &
  \xmark &
  \xmark &
  $O(\frac{1}{\sqrt{T}})$ &
  $O(\frac{1}{\sqrt{T}})$ \\ \hline
                             & CEG+~\citep{pethick2022escaping} & \xmark & \cmark & $O(\frac{1}{\sqrt{T}})$        & $O(\frac{1}{\sqrt{T}})$      \\ \cline{2-6} 
\multicolumn{1}{l}{} &
  OGDA~\citep{bohm2022solving, bot2022fast} &
  \cmark &
  \xmark &
  $O(\frac{1}{\sqrt{T}})$ &
  $O(\frac{1}{\sqrt{T}})$ \\ \cline{2-6} 
\multicolumn{1}{l}{}         & OG[{\bf this paper}]               & \cmark & \cmark & $O(\frac{1}{\sqrt{T}})$        & $O(\frac{1}{\sqrt{T}})$      \\ \hline
\multirow{3}{*}{Accelerated} & FEG~\citep{lee2021fast}          & \xmark & \xmark & $O(\frac{1}{T})$               &                              \\ \cline{2-6} 
                             & AS~\citep{cai2022accelerated}    & \xmark & \cmark & $O(\frac{1}{T})$               &                              \\ \cline{2-6} 
                             & ARG~[{\bf This paper}]             & \cmark & \cmark & $O(\frac{1}{T})$               &                              \\ \hline
\end{tabular}
\end{adjustbox}
\caption{Existing results for min-max optimization problem with  non-monotone operators. A \cmark{} in ``Constraints?" means the algorithm works in the constrained setting. The convergence rate is in terms of the operator norm (in the unconstrained setting) and the residual (in the constrained setting).}
\label{tab:table}
\end{table}

\subsection{Our Contributions}\label{sec:contribution}
Throughout the paper, we adopt the more general and abstract framework of inclusion problems, which includes constrained min-max optimization as a special case. More specifically, we consider the following problem. \paragraph{Inclusion Problem.} Given $E = F +A$ where $F: \R^n \rightarrow \R^n$ is a single-valued (possibly non-monotone) operator and $A: \R^n \rightrightarrows \R^n$ is a set-valued maximally monotone operator, the \emph{inclusion} problem is defined as follows
\begin{equation}\label{MI}
\tag{IP}
\text{find } z^* \in \Z \text{ such that } \boldsymbol{0} \in E(z^*) =  F(z^*) + A(z^*).
\end{equation}

As shown in the following example, we can interpret a min-max optimization problem as an inclusion problem.

\begin{example}[Min-Max Optimization]\label{ex:minmax}
The following structured min-max optimization problem captures a wide range of applications in machine learning such as GANs, adversarial examples, robust optimization, and reinforcement learning:
\begin{align}\label{eq:min-max}
    \min_{x \in \R^{n_x}} \max_{y \in \R^{n_y}} f(x,y) + g(x) - h(y),
\end{align}
where $f(\cdot,\cdot)$ is possibly non-convex in $x$ and non-concave in $y$. Regularized and constrained min-max problems are covered by appropriate choices of  lower semi-continuous and convex functions $g$ and $h$. Examples include the $\ell_1$-norm, the $\ell_2$-norm, and the indicator function of a closed convex feasible set. Let $z=(x,y)$, if we define $F(z) = (\partial_x f(x,y), -\partial_y f(x,y))$ and $A(z) = (\partial g(x), \partial h(y))$, where $A$ is maximally monotone, then the first-order optimality condition of \eqref{eq:min-max} has the form of an inclusion problem. 
\end{example}
 
\citep{daskalakis2021complexity} shows that without any assumption on the operator $E=F+A$, the problem is intractable.\footnote{Indeed, even if $A$ is maximally monotone, \citep{daskalakis2021complexity} implies that the problem is still intractable without further assumptions on $F$.} The most well understood setting is when $E$ is monotone, i.e., $\InAngles{u-v,z-z'}\geq 0$ for all $z,z'$ and $u\in E(z)$, $v\in E(z')$, which captures convex-concave min-max optimization. Motivated by non-convex non-concave min-max optimization, we consider the two most widely studied families of non-monotone operators: (i) negatively comonotone operators and (ii) operators that satisfy the less restrictive weak MVI. See Section~\ref{sec:preliminaries} for more detailed discussion on their relationship. Here are the main contributions of this paper.
 
 \medskip\noindent
\hspace{.2in}\begin{minipage}{0.93\textwidth}\textbf{Contribution 1:} We provide an extension of the \emph{Optimistic Gradient (OG)} method for inclusion problems when the operator $E=F+A$ satisfies the weak MVI. More specifically, we prove that OG has a $O(\frac{1}{\sqrt{T}})$ convergence rate (Theorem~\ref{thm:OG-weak MVI}) matching the state of the art algorithms~\citep{diakonikolas2021efficient,pethick2022escaping}. Importantly, our algorithm only requires a single oracle call to $F$ and a single call to the resolvent of $A$.\footnote{The resolvent of A is defined as $(I + A)^{-1}$. When $A$ is the subdifferential of the indicator function of a closed convex set, the resolvent operator is exactly the Euclidean projection. Hence our algorithm performs a single projection in the constrained case.}
\end{minipage}

Next, we provide an accelerated single-call method when the operator satisfies the stronger negatively comonotone condition.

\medskip\noindent
\hspace{.2in}\begin{minipage}{0.93\textwidth}\textbf{Contribution 2:} We design an accelerated version of the \emph{Reflected Gradient (RG)}~\citep{chambolle_first-order_2011,malitsky_projected_2015,cui2016analysis,hsieh_convergence_2019} method that we call the \emph{Accelerated Reflected Gradient~\eqref{eq:ARG}} method, which has the optimal $O(\frac{1}{T})$ convergence rate for inclusion problems whose operators $E=F+A$ are negatively comonotone (Theorem~\ref{thm:last-ARG}). Note that $O(\frac{1}{T})$ is the optimal convergence rate for any first-order methods even for monotone inclusion problems~\citep{diakonikolas2020halpern,yoon2021accelerated}. Importantly, \ref{eq:ARG} only requires a single oracle call to $F$ and a single call to the resolvent of $A$.
\end{minipage}
 
Finally, we resolve an open question from~\citep{hsieh_convergence_2019}.
 
\medskip\noindent
\hspace{.2in}\begin{minipage}{0.93\textwidth}\textbf{Contribution 3:} We show that the \emph{Reflected Gradient (RG)} method has a \emph{last-iterate} convergence rate of $O(\frac{1}{\sqrt{T}})$ for constrained convex-concave min-max optimization (Theorem~\ref{thm:last-RG}). \cite{hsieh_convergence_2019} show that the RG algorithm asymptotically converges but fails to obtain a concrete rate. We strengthen their result to obtain a tight finite convergence rate for RG.
\end{minipage}

We also provide illustrative numerical experiments in Appendix~\ref{app:experiments}.

\subsection{Related Works}
We provide a brief discussion of the most relevant and recent results on nonconvex-nonconcave min-max optimization here and defer the discussion on related results in the convex-concave setting to Appendix~\ref{appendix:related works}. We also refer readers to \citep{facchinei_finite-dimensional_2003,bauschke_convex_2011,ryu22large} and references therein for a comprehensive literature review on inclusion problems and related variational inequality problems.
\paragraph{Structured Nonconvex-Nonconcave Min-Max Optimization.}
Since in general nonconvex-nonconcave min-max optimization problems are intractable, recent works study problems under additional assumptions. The \emph{Minty variational inequality} (MVI) assumption (also called coherence or variational stability), which covers all quasiconvex-concave and starconvex-concave problems, is well-studied in e.g., \citep{dang2015convergence,zhou2017stochastic,liu2019towards,malitsky2020golden,song2020optimistic,liu2021first}.  Extragradient-type algorithms has $O(\frac{1}{\sqrt{T}})$ convergence rate for problems that satisfies MVI~\citep{dang2015convergence}. 

\cite{diakonikolas2021efficient} proposes a weaker assumption called \emph{weak MVI}, which includes both MVI or negative comonotonicity \citep{bauschke2021generalized} as special cases. 
Under the weak MVI, the EG+ \citep{diakonikolas2021efficient} and OGDA+ \citep{bohm2022solving} algorithms have $O(\frac{1}{\sqrt{T}})$ convergence rate in the unconstrained setting.  Recently, \cite{pethick2022escaping} generalizes EG+ to CEG+ algorithm, achieving the same convergence rate in the general (constrained) setting. To the best of our knowledge, the \ref{OG} algorithm is the only single-call single-resolvent algorithm with $O(\frac{1}{\sqrt{T}})$ convergence rate when we only assume weak MVI (Theorem~\ref{thm:OG-weak MVI}).

The result for accelerated algorithms in the nonconvex-nonconcave setting is sparser. 
For negatively comonotone operators, optimal $O(\frac{1}{T})$  convergence rate is achieved by variants of the \ref{eq:EG} algorithm in the unconstrained setting~\citep{lee2021fast} and in the constrained setting~\citep{cai2022accelerated} .
To the best of our knowledge, the \ref{eq:ARG} algorithm is the first efficient single-call single-resolvent method that achieves the accelerated and optimal $O(\frac{1}{T})$ convergence rate in the constrained nonconvex-nonconcave setting (Theorem~\ref{thm:last-ARG}). We summarize previous results and our results in Table~\ref{tab:table}. Our analysis of \ref{eq:ARG} is inspired by~\citep{cai2022accelerated} and uses a similar potential function argument.

\section{Preliminaries}\label{sec:preliminaries}
\paragraph{Basic Notations.} Throughout the paper, we focus on the Euclidean space $\R^n$ equipped with inner product $\InAngles{\cdot,\cdot}$. We denote the standard $\ell_2$-norm by $\InNorms{\cdot}$. For any closed and convex set $\Z \subseteq \R^n$, $\Pi_\Z \InBrackets{\cdot}:\R^n \rightarrow \Z$ denotes the Euclidean projection onto set $\Z$ such that for any $z\in \R^n, \Pi_\Z\InBrackets{z} = \argmin_{z' \in \Z} \InNorms{z- z'}$. We denote $\mathcal{B}(z,r)$ the $\ell_2$-ball centered at $z$ with radius $r$. 

\paragraph{Normal Cone.}  We denote $N_\Z: \Z \rightarrow \R^n$ to be the normal cone operator such that for $z \in \Z$, $N_\Z(z) = \{a: \InAngles{a, z' - z} \le 0, \forall z' \in \Z \}$. Define the indicator function \begin{equation*}
\indSet_\Z(z) = \begin{cases}
0 & \quad \text{if }z\in\Z, \\
+\infty & \quad \text{otherwise}.
\end{cases}
\end{equation*}
It is not hard to see that the subdifferential operator $\partial \indSet_\Z = N_\Z$. A useful fact is that if $z = \Pi_\Z \InBrackets{z'}$, then $\lambda(z' - z) \in N_\Z(z)$ for any $\lambda \ge 0$. 

\paragraph{Monotone Operator.} We recall some standard definitions and results on monotone operators here and refer the readers to \citep{bauschke_convex_2011,ryu2016primer,ryu22large} for more detailed introduction. A \emph{set-valued operator} $A: \R^n \rightrightarrows \R^n$ maps each point $z \in \R^n$ to a subset $A(z) \subseteq \R^n$. We denote the \emph{graph} of $A$ as $\Graph(A):= \{ (z, u): u \in A(z) \}$ and the \emph{zeros} of $A$ as $\Zero(A) = \{z : \boldsymbol{0} \in A(z)\}$. The inverse operator of $A$ is denoted as $A^{-1}$ whose graph is $\Graph(A^{-1}) = \{(u,z): (z, u) \in \Graph(A)\}$. For two operators $A$ and $B$, we denote $A + B$ to be the operator with graph $ \Graph(A+B) = \{(z, u_A + u_B): (z, u_A) \in \Graph(A), (z, u_B) \in \Graph(B)\}$.
We denote the identity operator as $I : \R^n \rightarrow \R^n$.  We say operator $A$ is \emph{single valued} if $|A(z)| \le 1$ for all $z \in \R^n$. Single-valued operator $A$ is \emph{$L$-Lipschitz} if $\InNorms{A(z) - A(z')} \le L \cdot \InNorms{z - z'}, \forall z,z' \in \R^n.$
Moreover, we say $A$ is \emph{non-expansive} if it is $1$-Lipschitz.

\begin{definition}[(Maximally) monotonicity]
An operator $A: \R^n \rightrightarrows \R^n$ is \emph{monotone} if 
\begin{align*}
    \InAngles{u-u',z-z'} \ge 0, \quad \forall (z,u), (z',u') \in \Graph(A). 
\end{align*}
Moreover, $A$ is \emph{maximally monotone} if $A$ is monotone and $\Graph(A)$ is not properly contained in the graph of any other monotone operators.
\end{definition}
When $g : \R^n \rightarrow \R^n$ is closed, convex, and proper, then its subdifferential operator $\partial g$ is maximally monotone. As an example, the normal cone operator $N_\Z = \partial \indSet_\Z$ is maximally monotone. 
 
We denote {\bf resolvent} of $A$ as $J_A = (I + A)^{-1}$. Some useful properties of the resolvent are summarized in the following proposition. 
\begin{proposition}
\label{prop:JA}
If $A$ is maximally monotone, then $J_A$ satisfies the following.
\begin{enumerate}
    \item The domain of $J_A$ is $\R^n$. $J_A$ is non-expansive and single-valued on $\R^n$.
    \item If $z=J_A(z')$, then $z'-z \in A(z)$. If $c \in A(z)$, then $z = J_A(z + c)$.
    \item When $A = N_\Z$ is the normal cone operator for some closed convex set $\Z$, then $J_{\eta A} = \Pi_\Z$ is the Euclidean projection onto $\Z$ for all $\eta > 0$.
\end{enumerate}
\end{proposition}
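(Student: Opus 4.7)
The plan is to handle each of the three claims in order, using one nontrivial external fact (Minty's surjectivity theorem) and otherwise working directly from the definition $J_A = (I+A)^{-1}$ and the monotonicity inequality for $A$.

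For Part 1, I would invoke Minty's theorem: maximal monotonicity of $A$ implies that the range of $I+A$ is all of $\R^n$, hence the domain of $J_A$ is $\R^n$. I expect this to be the main obstacle, since it is the only place where maximality (as opposed to plain monotonicity) is used in an essential way; I would cite it from \citep{bauschke_convex_2011} rather than reprove it. Given surjectivity, nonexpansiveness and single-valuedness follow jointly: for any $z_1', z_2' \in \R^n$ and any $u_i \in J_A(z_i')$, we have $z_i' - u_i \in A(u_i)$, so monotonicity of $A$ gives $\InAngles{(z_1' - u_1) - (z_2' - u_2), u_1 - u_2} \ge 0$. Rearranging yields $\InAngles{z_1' - z_2', u_1 - u_2} \ge \InNorms{u_1 - u_2}^2$, and Cauchy--Schwarz then implies $\InNorms{u_1 - u_2} \le \InNorms{z_1' - z_2'}$. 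Setting $z_1' = z_2'$ forces $u_1 = u_2$ (single-valuedness), and the inequality itself is nonexpansiveness on $\R^n$.

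For Part 2, both directions are straightforward unrollings of the definition. If $z = J_A(z')$, then $z \in (I+A)^{-1}(z')$, i.e., $z' \in (I+A)(z) = \{z\} + A(z)$, so $z' - z \in A(z)$. Conversely, if $c \in A(z)$ then $z + c \in \{z\} + A(z) = (I+A)(z)$, so $z \in (I+A)^{-1}(z+c)$; by the single-valuedness from Part 1 this set is a singleton equal to $\{J_A(z+c)\}$, giving $z = J_A(z+c)$.

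For Part 3, the key observation is that $N_\Z(z)$ is a cone, so $\eta N_\Z = N_\Z$ for every $\eta > 0$, hence $J_{\eta N_\Z} = J_{N_\Z}$. By Part 2, $z = J_{N_\Z}(z')$ is equivalent to $z \in \Z$ together with $z' - z \in N_\Z(z)$, which by the definition of the normal cone means $\InAngles{z' - z, w - z} \le 0$ for all $w \in \Z$. This is exactly the variational characterization of the Euclidean projection, i.e., the first-order optimality condition for $z = \argmin_{w \in \Z} \InNorms{z' - w}^2$, so $z = \Pi_\Z\InBrackets{z'}$ as claimed.
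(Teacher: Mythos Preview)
Your proof is correct and follows the standard textbook argument. Note, however, that the paper does not actually prove Proposition~\ref{prop:JA}: it is stated in the preliminaries as a known fact about resolvents of maximally monotone operators, with the reader referred to the monotone-operator references (e.g., \citep{bauschke_convex_2011,ryu2016primer,ryu22large}) for background. Your write-up is exactly the kind of argument one finds in those sources---Minty's theorem for the domain, the firm-nonexpansiveness calculation for single-valuedness and $1$-Lipschitzness, the definitional equivalences for Part~2, and the cone invariance $\eta N_\Z = N_\Z$ plus the variational characterization of projection for Part~3---so there is nothing to compare against in the paper itself.
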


\paragraph{Non-Monotone Operator.}
\begin{definition}[Weak MVI~\citep{diakonikolas2021efficient,pethick2022escaping}]\label{def:weak MVI}
An operator $A: \R^n \rightrightarrows \R^n$ satisfies \emph{weak MVI} if for some $z^* \in \Zero(A)$, there exists $\rho \le 0$
\begin{align*}
    \InAngles{u,z-z^*} \ge \rho \InNorms{u}^2, \quad \forall (z,u) \in \Graph(A). 
\end{align*}
\end{definition}

\begin{definition}[Comonotonicity~\citep{bauschke2021generalized}]\label{def:comonotone}
An operator $A: \R^n \rightrightarrows \R^n$ is \emph{$\rho$-comonotone} if 
\begin{align*}
    \InAngles{u-u',z-z'} \ge \rho \InNorms{u-u'}^2, \quad \forall (z,u), (z',u') \in \Graph(A). 
\end{align*}
\end{definition}
When $A$ is $\rho$-comonotone for $\rho > 0$, then $A$ is also known as $\rho$-cocoercive, which is a stronger condition than monotonicity. When $A$ is $\rho$-comonotone for $\rho < 0$, then $A$ is non-monotone. Weak MVI  with $\rho = 0$ is also know as MVI, coherence, or variational stability. Note that the weak MVI is implied by negative comonotonicity. We refer the readers to \citep[Example 1]{lee2021fast}, \citep[Section 2.2]{diakonikolas2021efficient} and \citep[Section 5]{pethick2022escaping} for examples of min-max optimization problems that satisfy the two conditions.

\subsection{Problem Formulation}
\paragraph{Inclusion Problem.} Given $E = F +A$ where $F: \R^n \rightarrow \R^n$ is a single-valued (possibly non-monotone) operator and $A: \R^n \rightrightarrows \R^n$ is a set-valued maximally monotone operator, the \emph{inclusion} problem is defined as follows
\begin{equation*}
\tag{IP}
\text{find } z^* \in \Z \text{ such that } \boldsymbol{0} \in E(z^*) =  F(z^*) + A(z^*).
\end{equation*}

We say $z$ is an $\epsilon$-approximate solution to an inclusion problem~\eqref{MI} if $\boldsymbol{0} \in F(z) + A(z) + \mathcal{B}(\boldsymbol{0},\epsilon)$.
Throughout the paper, we study \ref{MI} problems under the following assumption. 
\begin{assumption}
\label{assumption:basic}
In the setup of \ref{MI},
\begin{enumerate}
    \item there exists $z^* \in \Zero(E)$, i.e., $\boldsymbol{0} \in F(z^*) + A(z^*)$.
    \item $F$ is $L$-Lipschitz.
    \item $A$ is maximally monotone.
\end{enumerate}
\end{assumption}
When $F$ is monotone, we refer to the corresponding \ref{MI} problem as a \emph{monotone inclusion} problem, which covers convex-concave min-max optimization. In the more general non-monotone setting, we would study problems that satisfy negative comonotonicity or weak MVI.
\begin{assumption}[Comonotonicity]
\label{assumption:comonotone}
In the setup of \ref{MI}, $E = F+A$ is $\rho$-comonotone, i.e.,
\begin{align*}
    \InAngles{u-u',z-z'}\ge \rho \InNorms{u-u'}^2, \quad \forall (z,u), (z',u') \in \Graph(E). 
\end{align*}
\end{assumption}

\begin{assumption}[Weak MVI]
\label{assumption:weak MVI}
In the setup of \ref{MI}, $E = F+A$ satisfies weak MVI with $\rho \le 0$, i.e., there exists $z^* \in \Zero(E)$, 
\begin{align*}
    \InAngles{u,z-z^*}\ge \rho \InNorms{u}^2, \quad \forall (z,u)\in \Graph(E). 
\end{align*}
\end{assumption}

\notshow{
\begin{example}[Constrained and Regularized Min-Max Optimization]
A unified formulation of a wide range of applications in machine learning such as GANs, adversarial examples, robust optimization is
\begin{align}\label{eq:min-max}
    \min_{x \in \R^{n_x}} \max_{y \in \R^{n_y}} f(x,y) + g(x) - h(y),
\end{align}
where $f(\cdot,\cdot)$ is possibly nonconvex in $x$ and nonconcave in $y$. Regularized and constrained min-max problems are covered by appropriate choices of  lower semicontinuous and convex functions $g$ and $h$. Examples include $\ell_1$-norm, $\ell_2$-norm, and indicator function of a convex feasible set. Let $z=(x,y)$, if we define $F(z) = (\partial_x f(x,y), -\partial_y f(x,y))$ and $A(z) = (\partial g(x), \partial h(y))$, where $A$ is maximally monotone, then the first-order optimality condition of \eqref{eq:min-max} has the form of \ref{MI}. See \cite[Example 1]{lee2021fast} for examples of nonconvex-nonconcave conditions that are implied by negative comonotonicity.
\end{example}
}
\notshow{
\paragraph{Monotone Inclusion.} Given a single-valued operator $F: \R^n \rightarrow \R^n$ and a set-valued maximally monotone operator $A: \R^n \rightrightarrows \R^n$. The \emph{monotone inclusion} (MI) problem associated with $F$ and $A$ is stated as 
\begin{equation}
\label{MI}
\tag{MI}
\text{find } z^* \in \Z \text{ such that } \boldsymbol{0} \in F(z^*) + A(z^*).
\end{equation}
Note that \eqref{VI} is a special case of \eqref{MI} when $A = \partial \ind_\Z = N_\Z$ is the subdifferential operator of the indicator function for the feasible set $\Z$: 
\begin{align*}
    &\boldsymbol{0} \in F(z^*) + N_\Z(z^*) \\
    \Leftrightarrow &-F(z^*) \in N_\Z(z^*) \\
    \Leftrightarrow & \InAngles{F(z^*), z^* - z} \le 0, \forall z \in \Z.
\end{align*}  

We say $z$ is an $\epsilon$-approximate solution to \eqref{MI} if 
\begin{align*}
    \boldsymbol{0} \in F(z) + A(z) + \mathcal{B}(\boldsymbol{0},\epsilon).
\end{align*}
It is not hard to see that when $\min_{c \in A(z)} \InNorms{F(z) + c} \le \epsilon$, $z$ is an $\epsilon$-approximate solution to \eqref{MI}. Moreover when  $A = \partial \ind_\Z$, the gap of $z$ is at most $\gap_{\Z,F,D}(z) \le \epsilon \cdot D$. Thus an approximate solution to \eqref{MI} is also an approximate solution to  \eqref{VI}. 

We summarize the assumptions on \eqref{MI}.
\begin{assumption}
\label{assumption:comonotone}
In the setup of \eqref{MI}
\begin{enumerate}
    \item there exists $z^* \in \R^n$ such that $\boldsymbol{0} \in F(z^*) + A(z^*)$.
    \item $F$ is $L$-Lipschitz
    \item $A$ is maximally monotone
    \item $F+A$ is $\rho$-comonotone for some $\rho \le 0$. 
\end{enumerate}
\end{assumption}
}

An important special case of inclusion problem is the variational inequality problem.
\paragraph{Variational Inequality.} Let $\Z \subseteq \R^n$ be a closed and convex set and $F:\R^n \rightarrow \R^n$ be a single-valued operator. The \emph{variation inequality} (VI) problem associated with $\Z$ and $F$ is stated as 
\begin{equation}
\label{VI}
\tag{VI}
\text{find } z^* \in \Z \text{ such that } \InAngles{F(z^*), z^* - z} \le 0 ,\:\forall z \in \Z. 
\end{equation}
Note that \ref{VI} is a special case of \ref{MI} when $A = N_\Z = \partial \indSet_\Z $ is the normal cone operator: 
\begin{align*}
    \boldsymbol{0} \in F(z^*) + N_\Z(z^*) 
    \Leftrightarrow -F(z^*) \in N_\Z(z^*) 
    \Leftrightarrow  \InAngles{F(z^*), z^* - z} \le 0, \forall z \in \Z.
\end{align*}  
The general formulation of \ref{VI} unifies many problems such as convex optimization, min-max optimization, computing Nash equilibria in multi-player concave games, and is  extensively-studied since 1960s~\citep{facchinei_finite-dimensional_2003}. Definitions of the convergence measure for \ref{VI} and the classical algorithms, \ref{eq:EG} and \ref{eq:PEG}, are presented in Appendix~\ref{appendix:pre}.

\subsection{Convergence Measure} 
We focus on a strong convergence measure called the \emph{tangent residual}, defined as $r_{F,A}^{tan}(z) :=  \min_{c \in A(z)} \InNorms{F(z) + c}.$ It is clear by definition that  $r^{tan}_{F,A}(z) \le \epsilon$ implies $z$ is an $\epsilon$-approximate solution to the inclusion \eqref{MI} problem, and also an $(\epsilon \cdot D)$ approximate strong solution to the corresponding variational inequality \eqref{VI} problem when $\Z$ is bounded by $D$. Moreover, the tangent residual is an upper bound of other notion of residuals in the literature such as the natural residual $r^{nat}_{F,A}$~\citep{diakonikolas2020halpern} or the forward-backward residual $r^{fb}_{F,A}$~\citep{yoon2022accelerated} as shown in Proposition~\ref{prop:residual} (see Appendix~\ref{appendix:fact-residual} for the formal statement  and proof). Thus our convergence rates on the tangent residual also hold for the natural residual or the forward-backward residual. Note that in the unconstrained setting where $A = 0$, these residuals are all equivalent to the \emph{operator norm} $\InNorms{F(z)}$.

\section{Optimistic Gradient Method for Weak MVI Problems}
In this section, we consider an extension of the \emph{Optimistic Gradient~\eqref{OG}} algorithm \citep{daskalakis2018training,mokhtari2020unified,mokhtari2020convergence,hsieh_convergence_2019,peng_training_2020} for inclusion problems: given arbitrary starting point $z_{-\half} = z_0 \in \R^n$ and step size $\eta > 0$, the update rule is
\begin{equation}
\label{OG}
\tag{OG}
    \begin{aligned}
        z_{t+\half} &= J_{\eta A} \InBrackets{z_t - \eta F(z_{t-\half})}, \\
        z_{t+1} &= z_{t+\half} + \eta F(z_{t-\half}) - \eta F(z_{t+\half}).  \\
    \end{aligned}
\end{equation}
For $t\ge 1$, the update rule can also be written as $z_{t+\frac{3}{2}} = J_{\eta A} \InBrackets{z_{t+\half} - 2\eta F(z_{t+\half}) + \eta F(z_{t - \half})}$, which coincides with the forward-reflected-backward algorithm~\citep{malitsky_forward-backward_2020}. We remark that the update rule of \ref{OG} is different from the \emph{Optimistic Gradient Descent/Ascent (OGDA)} algorithm (also known as \emph{Past Extra Gradient \eqref{eq:PEG} algorithm})~\citep{popov_modification_1980}, which is single-call but requires two projections in each iteration.  

Previous results for \ref{OG} only hold in the convex-concave (monotone) setting. The main result in this section is that \ref{OG} has $O(\frac{1}{\sqrt{T}})$ convergence rate even for nonconvex-nonconcave min-max optimization problems that satisfy weak MVI, matching the state of the art results achieved by two-call methods~\citep{diakonikolas2021efficient,pethick2022escaping}. Remarkably, \ref{OG} only requires single call to $F$ and single call to the resolvent $J_{\eta A}$ in each iteration. The main result is shown in Theorem~\ref{thm:OG-weak MVI}. The proof relies on a simple yet important observation that $\frac{z_t - z_{t+1}}{\eta} \in F(z_{t+\half}) + A(z_{t+\half})$.

\begin{theorem}
\label{thm:OG-weak MVI}
    Assume Assumption~\ref{assumption:basic} and~\ref{assumption:weak MVI} hold with $\rho \in ( -\frac{1}{12\sqrt{3}L}, 0]$. Consider the iterates of \eqref{OG} with step size $\eta \in (0, \frac{1}{2L})$ satisfying $C = \frac{1}{2} + \frac{2\rho}{\eta} - 2\eta^2 L^2 > 0$ (existence of such $\eta$ is guaranteed by Fact~\ref{fact:eta-MVI}). Then for any $T \ge 1$,
    \begin{align*}
        \min_{t \in [T]} r^{tan}_{F,A}(z_{t+\half})^2 \le  \min_{t \in [T]} \frac{ \InNorms{z_{t+1}- z_{t}}^2}{\eta^2} \le \frac{H^2}{C\eta^2} \cdot \frac{1}{T},
    \end{align*}
    where $H^2 = \InNorms{z_1 - z^*}^2 + \frac{1}{4} \InNorms{z_{\half} - z_0}^2$.
\end{theorem}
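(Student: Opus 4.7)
The plan is to reduce everything to a descent inequality on a suitable potential, using the single key algebraic identity that characterizes \eqref{OG}.

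First I would establish the observation highlighted before the theorem: starting from $z_{t+\half} = J_{\eta A}[z_t - \eta F(z_{t-\half})]$, Proposition~\ref{prop:JA} gives $\frac{z_t - z_{t+\half}}{\eta} - F(z_{t-\half}) \in A(z_{t+\half})$, and adding $F(z_{t+\half})$ together with the update $z_{t+1} = z_{t+\half} + \eta F(z_{t-\half}) - \eta F(z_{t+\half})$ yields $\frac{z_t - z_{t+1}}{\eta} \in F(z_{t+\half}) + A(z_{t+\half})$. By definition of $r^{tan}_{F,A}$ this proves the first inequality $r^{tan}_{F,A}(z_{t+\half})^2 \le \|z_{t+1}-z_t\|^2/\eta^2$ for free, and reduces the problem to bounding the minimum of $\|z_{t+1}-z_t\|^2$.

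Next, I would apply Assumption~\ref{assumption:weak MVI} to $u = \frac{z_t - z_{t+1}}{\eta}$ and $z = z_{t+\half}$ to get $\langle z_{t+1}-z_t, z_{t+\half}-z^*\rangle \le -\frac{\rho}{\eta}\|z_{t+1}-z_t\|^2$. Expanding $\|z_{t+1}-z^*\|^2 - \|z_t-z^*\|^2 = 2\langle z_{t+1}-z_t, z_{t+\half}-z^*\rangle + 2\langle z_{t+1}-z_t, z_t-z_{t+\half}\rangle + \|z_{t+1}-z_t\|^2$, feeding in the weak-MVI bound, and applying the three-point identity to the middle term together with the exact relation $z_{t+1}-z_{t+\half} = \eta(F(z_{t-\half}) - F(z_{t+\half}))$ and $L$-Lipschitzness of $F$, a clean cancellation of the $\|z_{t+1}-z_t\|^2$ contributions produces the one-step inequality
\[
\|z_{t+1}-z^*\|^2 \le \|z_t-z^*\|^2 - \tfrac{2\rho}{\eta}\|z_{t+1}-z_t\|^2 - \|z_t - z_{t+\half}\|^2 + \eta^2 L^2 \|z_{t-\half}-z_{t+\half}\|^2.
\]
Using $\|z_{t+1}-z_t\|^2 \le 2\|z_{t+\half}-z_t\|^2 + 2\eta^2 L^2\|z_{t-\half}-z_{t+\half}\|^2$ to lower-bound $\|z_t-z_{t+\half}\|^2$ then trades the $-\|z_t-z_{t+\half}\|^2$ term for a favorable $-\tfrac12\|z_{t+1}-z_t\|^2$ at the cost of an extra $\eta^2 L^2\|z_{t-\half}-z_{t+\half}\|^2$.

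I would then set up the potential $\Phi_t := \|z_t - z^*\|^2 + \tfrac14 \|z_{t-\half}-z_{t-1}\|^2$, which matches $\Phi_1 = H^2$, and prove the descent $\Phi_{t+1} \le \Phi_t - C\|z_{t+1}-z_t\|^2$. To control the annoying $\|z_{t-\half}-z_{t+\half}\|^2$ term, I would use non-expansiveness of $J_{\eta A}$ to write $\|z_{t+\half}-z_{t-\half}\|^2 \le \|(z_t-z_{t-1}) - \eta(F(z_{t-\half})-F(z_{t-\frac{3}{2}}))\|^2$ and then exploit the update identity $z_t - z_{t-1} = (z_{t-\half}-z_{t-1}) + \eta(F(z_{t-\frac{3}{2}})-F(z_{t-\half}))$ to conclude $\|z_{t+\half}-z_{t-\half}\|^2 \le \|(z_{t-\half}-z_{t-1}) + 2(z_t - z_{t-\half})\|^2$; Young's inequality then expresses this in terms of the two "good" pieces $\|z_{t-\half}-z_{t-1}\|^2$ (which telescopes against the potential increment $\tfrac14\|z_{t+\half}-z_t\|^2 - \tfrac14\|z_{t-\half}-z_{t-1}\|^2$) and lower-order corrections absorbable by the choice $\eta < \tfrac{1}{2L}$ and $|\rho|$ small. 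Summing $\Phi_{t+1} \le \Phi_t - C\|z_{t+1}-z_t\|^2$ for $t=1,\ldots,T$, using $\Phi_{T+1}\ge 0$, produces $\min_{t\in[T]}\|z_{t+1}-z_t\|^2 \le \frac{H^2}{CT}$, which combined with Step 1 completes the proof.

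The main obstacle is the third step: the Lipschitz bound introduces $\|z_{t-\half}-z_{t+\half}\|^2$ which does not telescope by itself, and matching the precise constant $C = \tfrac12 + \tfrac{2\rho}{\eta} - 2\eta^2 L^2$ requires the weight $\tfrac14$ in the potential to be exactly right so that the positive cross-terms coming from Young's inequality, the Lipschitz bound, and the $\|z_{t+\half}-z_t\|^2$ increment all cancel against the potential's memory decrement. The existence of a feasible $\eta$ (Fact~\ref{fact:eta-MVI}) is what ensures $C>0$ in the prescribed range $\rho \in (-\tfrac{1}{12\sqrt{3}L}, 0]$.
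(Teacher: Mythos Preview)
Your Steps 1 and 2 are correct and match the paper exactly: the inclusion $\frac{z_t-z_{t+1}}{\eta}\in E(z_{t+\half})$ gives the first inequality for free, and combining weak MVI with the expansion of $\|z_{t+1}-z^*\|^2-\|z_t-z^*\|^2$ produces precisely the paper's one-step bound
\[
\|z_{t+1}-z^*\|^2 \le \|z_t-z^*\|^2 - \tfrac{2\rho}{\eta}\|z_{t+1}-z_t\|^2 + \|z_{t+1}-z_{t+\half}\|^2 - \|z_{t+\half}-z_t\|^2,
\]
which after $\|z_{t+1}-z_{t+\half}\|^2\le \eta^2 L^2\|z_{t+\half}-z_{t-\half}\|^2$ becomes your displayed inequality.

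The gap is in Step 3. Your memory term $\tfrac14\|z_{t-\half}-z_{t-1}\|^2$ does not telescope against the offending quantity $\|z_{t+\half}-z_{t-\half}\|^2$, and your non-expansiveness route does not close: after $\|z_{t+\half}-z_{t-\half}\|^2 \le \|(z_{t-\half}-z_{t-1})+2(z_t-z_{t-\half})\|^2$ you still carry $\|z_t-z_{t-\half}\|^2$, which is bounded only via $\|z_t-z_{t-\half}\|^2\le \eta^2 L^2\|z_{t-\half}-z_{t-\frac32}\|^2$ --- the very type of term you were trying to eliminate. The ``lower-order corrections'' are therefore not absorbable, and the descent $\Phi_{t+1}\le\Phi_t - C\|z_{t+1}-z_t\|^2$ with the \emph{exact} constant $C=\tfrac12+\tfrac{2\rho}{\eta}-2\eta^2 L^2$ does not follow from what you wrote.

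The paper fixes this with a different memory term. It introduces $c=\tfrac12-2\eta^2 L^2$, splits $-\|z_{t+\half}-z_t\|^2$ using the bound $\|z_{t+\half}-z_t\|^2 \ge \tfrac12\|z_{t+\half}-z_{t-\half}\|^2 - \eta^2 L^2\|z_{t-\half}-z_{t-\frac32}\|^2$, and exploits the algebraic identity $(1-2c)\eta^2 L^2 = 4\eta^4 L^4 = \tfrac12-c-(1+2c)\eta^2 L^2$ so that the recursion becomes
\[
\|z_{t+1}-z^*\|^2 + 4\eta^4 L^4\|z_{t+\half}-z_{t-\half}\|^2 \le \|z_t-z^*\|^2 + 4\eta^4 L^4\|z_{t-\half}-z_{t-\frac32}\|^2 - \bigl(c+\tfrac{2\rho}{\eta}\bigr)\|z_{t+1}-z_t\|^2.
\]
Thus the correct memory term is $4\eta^4 L^4\|z_{t-\half}-z_{t-\frac32}\|^2$, and the factor $\tfrac14$ in $H^2$ arises only at the end from the crude bound $4\eta^4 L^4<\tfrac14$ (since $\eta L<\tfrac12$), not from an exact coefficient in the potential.
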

\begin{proof}
From the update rule of \eqref{OG}, we have the following identity (see also \citep[Appendix B]{hsieh_convergence_2019}): for any $p \in \Z$,
\begin{align}
\label{eq:MVI-1}
    \InNorms{z_{t+1} - p}^2 &= \InNorms{z_t - p}^2 + \InNorms{z_{t+1} - z_{t+\half}}^2 - \InNorms{z_{t+\half} - z_t}^2 \notag \\& \quad\quad + 2\InAngles{z_t - \eta F(z_{t-\half}) - z_{t+\half} + \eta F(z_{t+\half}), p - z_{t+\half}}.
\end{align}
Since $z_{t+\half} = J_{\eta A} \InBrackets{z_t - \eta F(z_{t-\half})}$, we have $\frac{z_t - \eta F(z_{t-\half}) - z_{t+\half}}{\eta}  \in A(z_{t+\half})$ by Proposition~\ref{prop:JA}. Then
\begin{align*}
    \frac{z_t - z_{t+1}}{\eta} =  \frac{z_t - \eta F(z_{t-\half}) - z_{t+\half}}{\eta} +  F(z_{t+\half}) \in  F(z_{t+\half}) +  A(z_{t+\half}).
\end{align*}
Set $p = z^*$. By the weak MVI assumption, we have 
\begin{align}
\label{eq:MVI-2}
    2\InAngles{z_t - \eta F(z_{t-\half}) - z_{t+\half} + \eta F(z_{t+\half}), z^* - z_{t+\half}} &= 2 \eta \InAngles{\frac{z_t - z_{t+1}}{\eta} , z^* - z_{t+\half}} \notag \\
    &\le -\frac{2\rho}{\eta} \InNorms{z_t - z_{t+1}}^2.
\end{align}
Define $c = \frac{1}{2}-2\eta^2 L^2 > 0$. We have identity 
\begin{align}
\label{eq:MVI-identity}
     (1-2c)\eta^2L^2 = 4\eta^4 L^4  = \frac{1}{2} - c - (1+2c)\eta^2L^2.
\end{align}
Combining Equation~\eqref{eq:MVI-1} and \eqref{eq:MVI-2} and using $\InNorms{a+b}^2 \le 2\InNorms{a}^2 + 2\InNorms{b}^2$, we have
\begin{align} 
\label{eq:MVI-3}
    &\InNorms{z_{t+1} - z^* }^2 \notag \\
    &\le \InNorms{z_t - z^* }^2 + \InNorms{z_{t+1} - z_{t+\half}}^2 - \InNorms{z_{t+\half} - z_t}^2 + c\InNorms{z_t - z_{t+1}}^2 - (c + \frac{2\rho}{\eta})\InNorms{z_t - z_{t+1}}^2  \notag \\
    & \le\InNorms{z_t - z^* }^2 + (1+2c)\InNorms{z_{t+1} - z_{t+\half}}^2 - (1-2c) \InNorms{z_{t+\half} - z_t}^2 - (c + \frac{2\rho}{\eta})\InNorms{z_t - z_{t+1}}^2.
\end{align}
Using the update rule of \ref{OG} and $L$-Lipschitzness of $F$, we have that for any $t \ge 0$,
\begin{align}
\label{eq:MVI-4}
    \InNorms{z_{t+1} - z_{t+\half}}^2 = \InNorms{\eta F(z_{t-\half}) - \eta F(z_{t+\half})}^2 \le \eta^2 L^2 \InNorms{z_{t+\half} - z_{t-\half}}^2.
\end{align}
Moreover, using $\InNorms{a+b}^2 \le 2\InNorms{a}^2 + 2\InNorms{b}^2$ and Equation~\eqref{eq:MVI-4} , we have that for any $t \ge 1$,
\begin{align*}
    \InNorms{z_{t+\half} - z_{t-\half}}^2 \le 2\InNorms{z_{t+\half} - z_t}^2 + 2 \InNorms{z_t - z_{t-\half}}^2 \le 2\InNorms{z_{t+\half} - z_t}^2 + 2\eta^2 L^2 \InNorms{z_{t-\half} - z_{t-\frac{3}{2}}}^2.
\end{align*}
which imples 
\begin{align}
\label{eq:MVI-5}
\InNorms{z_{t+\half} - z_t}^2 \ge \frac{1}{2}  \InNorms{z_{t+\half} - z_{t-\half}}^2 - \eta^2 L^2 \InNorms{z_{t-\half} - z_{t-\frac{3}{2}}}^2.
\end{align}
Combining Equation~\eqref{eq:MVI-identity}, \eqref{eq:MVI-3}, \eqref{eq:MVI-4}, and \eqref{eq:MVI-5}, we have that for all $t \ge 1$. 
\begin{align*}
    &\InNorms{z_{t+1} - z^* }^2  \\
    &\le \InNorms{z_t - z^* }^2 + (1+2c)\InNorms{z_{t+1} - z_{t+\half}}^2 - (1-2c) \InNorms{z_{t+\half} - z_t}^2 - (c + \frac{2\rho}{\eta})\InNorms{z_t - z_{t+1}}^2 \\
    & \le \InNorms{z_t - z^* }^2 +  (1-2c)\eta^2 L^2 \InNorms{z_{t-\half} - z_{t-\frac{3}{2}}}^2 - \InParentheses{\frac{1}{2} - c - (1+2c)\eta^2L^2} \InNorms{z_{t+\half} - z_{t-\half}}^2 \\
    & \quad - (c + \frac{2\rho}{\eta})\InNorms{z_t - z_{t+1}}^2 \\
    & = \InNorms{z_t - z^* }^2 +  4\eta^4 L^4 \InParentheses{ \InNorms{z_{t-\half} - z_{t-\frac{3}{2}}}^2 - \InNorms{z_{t+\half} - z_{t-\half}}^2} - (c + \frac{2\rho}{\eta})\InNorms{z_t - z_{t+1}}^2. 
\end{align*}

Telescoping the above inequality and using $c = \frac{1}{2}-2\eta^2L^2$ and $\eta L < \frac{1}{2}$, we get
\begin{align*}
   (\frac{1}{2} + \frac{2\rho}{\eta} -2\eta^2L^2) \sum_{t=1}^T \InNorms{z_t - z_{t+1}}^2 \le \InNorms{z_1 - z^*}^2 + \frac{1}{4} \InNorms{z_{\half} - z_{-\half}}^2.
\end{align*}
Note that $z_0$ is the same as $z_{-\frac{1}{2}}$. This completes the proof.
\end{proof}

\begin{fact}
\label{fact:eta-MVI}
For any $L > 0$ and $\rho > - \frac{1}{12\sqrt{3} L}$. There exists $\eta \in (0,\frac{1}{2L})$ such that $\frac{1}{2} + \frac{2\rho}{\eta} - 2\eta^2 L^2 > 0.$
\begin{proof}
Let $\eta = \frac{1}{2\sqrt{3}L}$, then the desired inequality holds whenever
\begin{align*}
    \rho > \frac{\eta L(1-4\eta^2 L^2)}{4} \cdot \frac{1}{L} = -\frac{1}{12\sqrt{3}L}. 
\end{align*}
\end{proof}
\end{fact}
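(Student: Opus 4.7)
The plan is to convert this existence statement into a one-dimensional optimization and then just read off an admissible $\eta$. First I would multiply the target inequality $\frac{1}{2} + \frac{2\rho}{\eta} - 2\eta^2 L^2 > 0$ through by $\eta/2 > 0$ and rearrange, obtaining the equivalent condition $-\rho < g(\eta)$ where $g(\eta) := \frac{\eta}{4} - L^2 \eta^3$. So it suffices to produce a single $\eta \in (0, \frac{1}{2L})$ with $g(\eta) > -\rho$, and the cleanest route is to maximize $g$ over this interval and then check that the resulting threshold on $-\rho$ matches the hypothesis.

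Second, I would carry out the one-variable calculus. Differentiating gives $g'(\eta) = \frac{1}{4} - 3 L^2 \eta^2$, so the unique critical point in $(0, \infty)$ is $\eta^\star = \frac{1}{2\sqrt{3}\, L}$. I would verify that $\eta^\star \in (0, \frac{1}{2L})$, which is immediate since $\sqrt{3} > 1$, and that $g''(\eta^\star) = -6 L^2 \eta^\star < 0$, confirming a maximum. Substituting back yields $g(\eta^\star) = \frac{1}{8\sqrt{3}\, L} - \frac{1}{24\sqrt{3}\, L} = \frac{1}{12\sqrt{3}\, L}$, which is exactly the constant appearing in the hypothesis.

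Finally, under the assumption $\rho > -\frac{1}{12\sqrt{3}\, L}$ we get $-\rho < \frac{1}{12\sqrt{3}\, L} = g(\eta^\star)$, so $\eta = \eta^\star$ witnesses the desired strict inequality, completing the argument. There is no real obstacle to this proof; the only thing one could slip on is the boundary check $\eta^\star < \frac{1}{2L}$, but it reduces to the trivial bound $\sqrt{3} > 1$. The essential insight is just that the $\eta$-dependent part of the inequality separates cleanly from $\rho$, so the critical threshold on $\rho$ is dictated by $\max g$, giving both a concrete admissible choice of step size and the sharp allowed range of weak-MVI parameters in one shot.
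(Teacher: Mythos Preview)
Your proof is correct and follows essentially the same approach as the paper: both pick $\eta = \frac{1}{2\sqrt{3}L}$ and verify that the inequality then reduces exactly to the hypothesis $\rho > -\frac{1}{12\sqrt{3}L}$. The only difference is cosmetic---the paper simply asserts this choice of $\eta$, whereas you derive it by maximizing $g(\eta)=\frac{\eta}{4}-L^2\eta^3$, which has the minor bonus of showing the constant $\frac{1}{12\sqrt{3}L}$ is sharp for this argument.
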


\section{Accelerated Reflected Gradient For Negatively Comonotone Problems}\label{sec:ARG}
In this section, we propose a new algorithm called the \emph{Accelerated Reflected Gradient \eqref{eq:ARG}} algorithm. We prove that \ref{eq:ARG} enjoys accelerated $O(\frac{1}{T})$ convergence rate for inclusion problems with comonotone operators (Theorem~\ref{thm:last-ARG}).  Note that the lower bound $\Omega(\frac{1}{T})$ holds even for the special case of convex-concave min-max optimization~\citep{diakonikolas2020halpern,yoon2021accelerated}.

Our algorithm is inspired by the \emph{Reflected Gradient~\eqref{eq:RG}} algorithm \citep{chambolle_first-order_2011,malitsky_projected_2015,cui2016analysis,hsieh_convergence_2019} for monotone variational inequalities. Starting at initial points $z_{-1} = z_0 \in \Z$, the update rule of \ref{eq:RG} with step size $\eta > 0$ is as follows: for $t = 0, 1, 2,\cdots$
\begin{equation}
\label{eq:RG}
\tag{RG}
    \begin{aligned}
        z_{t+\half} &= 2z_t - z_{t-1}, \\
        z_{t+1}     &= \Pi_\Z \InBrackets{z_t - \eta F(z_{t+\half})}.
    \end{aligned}
\end{equation}

We propose the following Accelerated Reflected Gradient~\eqref{eq:ARG} algorithm, which is a single-call single-resolvent first-order method. Given arbitrary initial points $z_0 = z_{\half}\in \R^n$ and step size $\eta > 0$, \ref{eq:ARG} sets $z_1 = J_{\eta A}\InBrackets{z_0 - \eta F(z_0)}$ and updates for $t = 1,2,\cdots$
\begin{equation}
\label{eq:ARG}
\tag{ARG}
    \begin{aligned}
        z_{t+\half} &= 2z_t - z_{t-1} + \frac{1}{t+1}(z_0 - z_t) - \frac{1}{t}(z_0 - z_{t-1}),\\
        z_{t+1}     &= J_{\eta A} \InBrackets{z_t - \eta F(z_{t+\half}) + \frac{1}{t+1}(z_0 -z_t)}.
    \end{aligned}
\end{equation}

We use the idea from \emph{Halpern iteration}~\citep{halpern1967fixed} to design the accelerated algorithm \eqref{eq:ARG}. This technique for deriving optimal first-order methods is also called \emph{Anchoring} and receives intense attention recently~\citep{diakonikolas2020halpern,yoon2021accelerated,lee2021fast,tran-dinh_halpern-type_2021,tran-dinh_connection_2022,cai2022accelerated}. We defer detailed discussion on these works to Appendix~\ref{appendix:related works}. We remark that the state of the art result from~\citep{cai2022accelerated} is a  variant of the \ref{eq:EG} algorithm that makes two oracle calls per iteration. Thus, to the best of our knowledge, \ref{eq:ARG} is the first single-call single-resolvent algorithm with optimal convergence rate for general inclusion problems with comonotone operators.

\begin{theorem}
\label{thm:last-ARG}
Assume Assumption~\ref{assumption:basic} and~\ref{assumption:comonotone} hold for $\rho \in [-\frac{1}{60L}, 0]$, then the accelerated reflected gradient \eqref{eq:ARG} algorithm with constant step size $\eta > 0$ satisfying Inequality~\eqref{eq:eta} has the following convergence rate: for any $T \ge 1$, 
\begin{equation*}
    r^{tan}_{F,A}(z_T) \le  \frac{\sqrt{6} H}{\eta} \cdot \frac{1}{T},
\end{equation*}
where $H^2 = \InNorms{z_0 -z^*}^2 + 4\InNorms{z_1- z_0}^2 \le \InNorms{z_0 -z^*}^2 + 4 r^{tan}_{F,A}(z_0)^2$. 
\end{theorem}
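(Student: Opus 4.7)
My plan is to mimic the potential-function/anchoring analysis of \citep{cai2022accelerated}, but adapted to the single-call reflected structure of \ref{eq:ARG}. The key idea is to identify an explicit element of $(F+A)(z_{t+1})$ produced by each iteration, and then build a Lyapunov function that decays like $1/t^2$.

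First I would translate the resolvent step into an operator inclusion. Since $z_{t+1}=J_{\eta A}\bigl[z_t-\eta F(z_{t+\half})+\frac{1}{t+1}(z_0-z_t)\bigr]$, Proposition~\ref{prop:JA} gives
\begin{equation*}
a_{t+1}:=\tfrac{1}{\eta}\bigl(z_t-z_{t+1}\bigr)-F(z_{t+\half})+\tfrac{1}{\eta(t+1)}(z_0-z_t)\in A(z_{t+1}),
\end{equation*}
so the vector $g_{t+1}:=F(z_{t+1})+a_{t+1}\in(F+A)(z_{t+1})$ dominates the tangent residual, i.e.\ $r^{tan}_{F,A}(z_{t+1})\le\|g_{t+1}\|$. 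Rewriting, $g_{t+1}=\bigl[F(z_{t+1})-F(z_{t+\half})\bigr]+\tfrac{1}{\eta}(z_t-z_{t+1})+\tfrac{1}{\eta(t+1)}(z_0-z_t)$. Reducing the problem to bounding $\|g_T\|$ is the starting point; everything else is showing $\|g_T\|=O(H/T)$.

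Next I would construct a Halpern-style potential of the form
\begin{equation*}
\Phi_t=\alpha\, t^2\,\|g_t\|^2+\beta\, t\,\bigl\langle g_t,\,z_t-z_0\bigr\rangle+\gamma\, t\,\|z_t-z_{t-1}\|^2+\delta\,\|z_t-z^*\|^2,
\end{equation*}
with positive constants $\alpha,\beta,\gamma,\delta$ to be fixed. The $t^2\|g_t\|^2$ term is the object I want to keep small; the $t\langle g_t,z_t-z_0\rangle$ anchor term is the standard trick that allows the $t^2\|g_t\|^2$ coefficient to grow; the $\|z_t-z_{t-1}\|^2$ term is forced on us by the reflection $z_{t+\half}=2z_t-z_{t-1}+\tfrac{1}{t+1}(z_0-z_t)-\tfrac{1}{t}(z_0-z_{t-1})$, which makes $g_{t+1}-g_t$ involve the second-order difference $z_{t+1}-2z_t+z_{t-1}$ through $F(z_{t+1})-F(z_{t+\half})$; and the $\|z_t-z^*\|^2$ term is the canonical ``distance to solution'' reserve needed to absorb the comonotonicity inner product. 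The initial value $\Phi_1$ is $O(H^2)$ because $\|g_1\|\le r^{tan}_{F,A}(z_1)$ which is controlled by $\|z_1-z_0\|/\eta$ through the definition of $z_1$, and by a one-step Lipschitz/non-expansiveness argument $\|z_1-z_0\|\le\eta\,r^{tan}_{F,A}(z_0)$.

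The main work, and the main obstacle, is verifying $\Phi_{t+1}\le\Phi_t$. I would do so as follows. (i) Expand $g_{t+1}-g_t$ using the update rules and the expression for $z_{t+\half}$, isolating a ``main'' term proportional to $\tfrac{1}{\eta(t+1)}(z_0-z_t)$ plus Lipschitz error $F(z_{t+1})-F(z_{t+\half})$ and reflection error terms. (ii) Bound the Lipschitz error by $L\|z_{t+1}-z_{t+\half}\|$ and expand $z_{t+1}-z_{t+\half}$ in terms of $z_{t+1}-z_t$, $z_t-z_{t-1}$, and $O(1/t)$ anchor residuals, so the squared errors become controlled by $\|z_{t+1}-z_t\|^2$ and $\|z_t-z_{t-1}\|^2$. (iii) Use comonotonicity $\langle g_{t+1}-g_t,\,z_{t+1}-z_t\rangle\ge\rho\|g_{t+1}-g_t\|^2$ to handle the cross term that appears when comparing $\langle g_{t+1},z_{t+1}-z_0\rangle$ with $\langle g_t,z_t-z_0\rangle$. (iv) Finally, collect all quadratic forms in $\|g_t\|^2$, $\|g_{t+1}\|^2$, $\|z_t-z_{t-1}\|^2$, $\|z_{t+1}-z_t\|^2$, and $\|z_{t+1}-z^*\|^2-\|z_t-z^*\|^2$, and choose $\alpha,\beta,\gamma,\delta$ and $\eta$ so that every residual coefficient is non-positive. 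This is where the regime $\rho\in[-\tfrac{1}{60L},0]$ and the step-size constraint \eqref{eq:eta} enter: they provide exactly the slack needed for the Lipschitz-times-reflection error terms to be dominated by the negative contributions from the $t$-growth of $\alpha$ and $\gamma$, together with comonotonicity. The hardest part will be keeping the book-keeping honest across the reflection step, since unlike the extragradient variant of \citep{cai2022accelerated} there is no actual midpoint gradient to pair cleanly with the next iterate, so every appearance of $F(z_{t+\half})$ must be rewritten in terms of operator values at genuine iterates.

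Once $\Phi_{t+1}\le\Phi_t$ is established, the conclusion is immediate: $\alpha T^2\|g_T\|^2\le\Phi_T\le\Phi_1=O(H^2)$, hence $r^{tan}_{F,A}(z_T)\le\|g_T\|\le\tfrac{\sqrt{6}H}{\eta T}$ after matching the explicit constants arising from the telescoped inequality. The bound $H^2\le\|z_0-z^*\|^2+4\,r^{tan}_{F,A}(z_0)^2$ follows from the one-step estimate $\|z_1-z_0\|\le\eta\,r^{tan}_{F,A}(z_0)$ derived via non-expansiveness of $J_{\eta A}$.
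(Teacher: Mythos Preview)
Your high-level strategy---anchoring potential in the style of \citep{cai2022accelerated}, with the tangent-residual element $g_t=F(z_t)+c_t$ playing the central role---is exactly what the paper does. The one-step estimates you sketch for $\|z_1-z_0\|$ and for $H^2$ are also correct and appear verbatim as Lemma~\ref{lem:bound-V1}. However, two concrete points in your plan do not survive contact with the actual algebra.

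\textbf{The potential is not exactly non-increasing.} The paper's potential is
\[
V_t=\tfrac{t(t+1)}{2}\,\eta^2\|g_t\|^2+\tfrac{t(t+1)}{2}\,\eta^2\|F(z_t)-F(z_{t-\frac12})\|^2+t\,\eta\langle g_t,\,z_t-z_0\rangle,
\]
so the ``reflection'' correction carries a $t^2$ (not $t$) weight and involves $\|F(z_t)-F(z_{t-\frac12})\|^2$ rather than $\|z_t-z_{t-1}\|^2$; there is no $\|z_t-z^*\|^2$ term. After the sum-of-squares identity (Proposition~\ref{prop:identity}) and the comonotonicity/Lipschitz inequalities, a sign-indefinite cross term $(t+1)\langle \eta F(z_{t+\frac12})-\eta F(z_{t+1}),\,\eta g_{t+1}\rangle$ remains; it cannot be absorbed by any choice of constants and is precisely the price of the single-call reflection. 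The paper only obtains
\[
V_{t+1}\le V_t+\tfrac{1}{8}\,\eta^2\|g_{t+1}\|^2
\]
(Lemma~\ref{lem:near-monotone potential}), and then closes the argument with a separate recursive sequence lemma (Proposition~\ref{prop:sequence analysis}) applied to $\tfrac{T^2}{4}\eta^2\|g_T\|^2\le H^2+\tfrac{1}{8}\sum_{t=2}^{T-1}\eta^2\|g_t\|^2$. Your claim that one can ``choose $\alpha,\beta,\gamma,\delta$ so that every residual coefficient is non-positive'' and obtain $\Phi_{t+1}\le\Phi_t$ outright is the step that fails.

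\textbf{The lower bound on the potential is not automatic.} Even with the paper's $V_t$, the anchor term $t\langle g_t,z_t-z_0\rangle$ has no sign, so ``$\alpha T^2\|g_T\|^2\le\Phi_T$'' is not free. The paper proves (Lemma~\ref{lem:relate potential to residual}) that $V_t\ge\tfrac{t(t+\frac12)}{4}\eta^2\|g_t\|^2-\|z_0-z^*\|^2$ by splitting $z_t-z_0=(z_t-z^*)+(z^*-z_0)$, applying comonotonicity at $z^*$ to the first piece, and Young's inequality to the second. This is where $\|z_0-z^*\|^2$ enters the constant $H^2$; it is not obtained from a $\delta\|z_t-z^*\|^2$ term in the potential.
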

\begin{remark}
Note that if Assumption~\ref{assumption:comonotone} is satisfied with respect to some $\rho>0$, it also satisfies Assumption~\ref{assumption:comonotone} with $\rho=0$, so Theorem~\ref{thm:last-ARG} applies.
\end{remark}
We provide a proof sketch for Theorem~\ref{thm:last-ARG} here and the full proof in Appendix~\ref{appendix:ARG}. Our proof is based on a potential function argument  similar to the one in ~\citep{cai2022accelerated}.

\paragraph{Proof Sketch.} We apply a potential function argument. We first show the potential function is approximately non-increasing and then prove that it is upper bounded by a term independent of $T$. As the potential function at step $t$ is also at least $\Omega(t^2) \cdot r^{tan}(z_t)^2$, we conclude that \ref{eq:ARG} has an $O(\frac{1}{T})$ convergence rate.

\section{Last-Iterate Convergence Rate of Reflected Gradient}
\label{sec:RG}

In this section, we show that the \emph{Reflected Gradient \eqref{eq:RG}} algorithm~\citep{chambolle_first-order_2011,malitsky_projected_2015,cui2016analysis,hsieh_convergence_2019} has a last-iterate convergence rate of $O(\frac{1}{\sqrt{T}})$ with respect to tangent residual and gap function (see Definition~\ref{dfn:gap}) for solving monotone variational inequalities (Theorem~\ref{thm:last-RG}). 

\begin{theorem}
\label{thm:last-RG}
For a variational inequality problem \eqref{VI} associated with a closed convex set $\Z$ and a monotone and $L$-Lipschitz operator $F$ with a solution $z^*$, the \eqref{eq:RG} algorithm with constant step size $\eta \in (0, \frac{1}{(1+\sqrt{2})L})$ has the following last-iterate convergence rate: for any $T \ge 1$, 
\begin{equation*}
    r^{tan}_{F,\Z}(z_T) \le \frac{\lambda H L}{\sqrt{T}}, \quad  \gap_{\Z,F,D}(z_T) \le \frac{\lambda D H L}{\sqrt{T}}
\end{equation*}
where $H^2 =4 \InNorms{z_0 - z^*}^2 + \frac{13}{L^2} \InNorms{F(z_0)}^2$ and $\lambda = \sqrt{\frac{6(1+3\eta^2 L^2)}{\eta^2 L ^2 \InParentheses{1 - (1+\sqrt{2})\eta L}}}$.
\end{theorem}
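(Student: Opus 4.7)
The plan is to reduce the theorem to establishing $\|z_T - z_{T-1}\|^2 = O(1/T)$. From the update rule and Proposition~\ref{prop:JA}, $\frac{z_{T-1} - z_T}{\eta} - F(z_{T-1/2}) \in N_\Z(z_T)$, so $\frac{z_{T-1}-z_T}{\eta} + F(z_T) - F(z_{T-1/2}) \in F(z_T) + N_\Z(z_T)$. Combining the $L$-Lipschitzness of $F$ with the identity $z_T - z_{T-1/2} = (z_T - z_{T-1}) - (z_{T-1} - z_{T-2})$ yields
\begin{equation*}
r^{tan}_{F,\Z}(z_T)^2 \le \frac{2}{\eta^2}\|z_T - z_{T-1}\|^2 + 4L^2\bigl(\|z_T - z_{T-1}\|^2 + \|z_{T-1} - z_{T-2}\|^2\bigr),
\end{equation*}
so it suffices to show that both displacements decay at rate $1/T$; the gap bound then follows from $\gap_{\Z,F,D}(z_T) \le D \cdot r^{tan}_{F,\Z}(z_T)$.

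To bound consecutive differences I combine a summable best-iterate bound with an approximate monotonicity of the displacement. For the former, start from the projection characterization $\langle z_{t+1} - z_t + \eta F(z_{t+1/2}), z^* - z_{t+1}\rangle \ge 0$ and expand $\langle F(z_{t+1/2}), z_{t+1} - z^*\rangle$ by pivoting through $z_{t+1/2}$ and $z_{t+1}$; using monotonicity of $F$ on $\R^n$ together with the VI condition applied at $z_{t+1} \in \Z$ controls the cross term, and the Lipschitz slack is absorbed via $\|z_{t+1/2}-z_{t+1}\| \le \|z_{t+1}-z_t\|+\|z_t-z_{t-1}\|$. The result is a one-step decrement
\begin{equation*}
\Phi_{t+1} \le \Phi_t - \gamma\,\|z_{t+1}-z_t\|^2, \qquad \Phi_t := \|z_t-z^*\|^2 + a\,\|z_t-z_{t-1}\|^2,
\end{equation*}
valid for suitable $a,\gamma>0$ precisely when $\eta L < \frac{1}{1+\sqrt{2}}$. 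Telescoping, together with the initialization $z_0 = z_{-1}$ and the boundary estimate $\|z_1-z_0\|\le \eta\|F(z_0)\|$ (which is how the $\|F(z_0)\|^2/L^2$ term enters $H^2$), yields $\sum_{t=0}^{T-1}\|z_{t+1}-z_t\|^2 = O(H^2)$.

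The main obstacle is the second ingredient: proving the squared displacement is approximately non-increasing along the RG trajectory. This is precisely the gap left open by \citet{hsieh_convergence_2019}, who could derive only asymptotic convergence. My plan is to apply the projection first-order condition at two consecutive steps (using $z_t, z_{t+1} \in \Z$ as mutual test points in each other's inequalities) and subtract, then invoke $L$-Lipschitzness of $F$ on the differences $F(z_{t+1/2}) - F(z_{t-1/2})$, whose norm is again controlled by $\|z_t - z_{t-1}\| + \|z_{t-1} - z_{t-2}\|$. After careful rearrangement, this should produce a second Lyapunov inequality $\Psi_{t+1} \le \Psi_t$ for a quadratic potential $\Psi_t$ in the triple $(z_{t+1}-z_t,\,z_t-z_{t-1},\,z_{t-1}-z_{t-2})$ that dominates $\|z_{t+1}-z_t\|^2$. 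Verifying positive semidefiniteness of this quadratic form together with a non-negative decrement is what forces the step-size condition $\eta L < \frac{1}{1+\sqrt{2}}$ and pins down the explicit constant $\lambda$ in the theorem. Combining the two Lyapunov arguments gives $T\cdot \|z_T - z_{T-1}\|^2 = O(H^2)$, which substituted into the tangent-residual bound above yields the claimed $O(HL/\sqrt{T})$ rates for both $r^{tan}_{F,\Z}(z_T)$ and $\gap_{\Z,F,D}(z_T)$.
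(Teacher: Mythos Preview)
Your three-step outline (summable best-iterate bound $+$ a non-increasing potential $\Rightarrow$ last-iterate rate) is exactly the architecture the paper uses, and your Steps~1--2 are essentially Lemmas~\ref{lem:best-iterate} and~\ref{lem:best-potential}. The gap is in Step~3, the ``approximate monotonicity of the displacement'' claim.

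You propose to obtain a non-increasing potential $\Psi_t$ that is a quadratic form in the three consecutive displacements, using only the two projection inequalities (with $z_t,z_{t+1}$ as mutual test points) and $L$-Lipschitzness of $F$. Carrying this out gives precisely the monotonicity of $N_\Z$ in the form
\[
\|u_{t+1}\|^2 \;\le\; \langle u_t,u_{t+1}\rangle \;-\;\eta\,\langle F(z_{t+\frac12})-F(z_{t-\frac12}),\,u_{t+1}\rangle,
\qquad u_t:=z_t-z_{t-1},
\]
and the only handle you allow on the last term is Lipschitzness together with $z_{t+\frac12}-z_{t-\frac12}=2u_t-u_{t-1}$. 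But then Cauchy--Schwarz yields $d_{t+1}\le(1+2\eta L)d_t+\eta L\,d_{t-1}$ with $d_t=\|u_t\|$, which never produces a non-increasing quadratic Lyapunov in the $u$'s for any $\eta>0$. More refined splittings (Young's inequality with free parameters, keeping the cross term $\langle u_t,u_{t-1}\rangle$, etc.) run into the same obstruction: with only these two ingredients you are missing one inequality, and no amount of ``careful rearrangement'' closes the SOS certificate. This is exactly the step where \citet{hsieh_convergence_2019} got stuck, and your plan as written does not add a new ingredient.

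The paper resolves this not with a displacement potential but with
\[
P_t \;=\; \bigl\|F(z_t)+c_t\bigr\|^2 \;+\; \bigl\|F(z_t)-F(z_{t-\frac12})\bigr\|^2,
\]
which depends on $F$ evaluated at the \emph{full} iterates $z_t$ (quantities that never appear in the \ref{eq:RG} update and hence cannot be expressed via the displacements $u_t$ alone). Proving $P_{t+1}\le P_t$ uses three inequalities simultaneously---monotonicity of $F$ at $(z_t,z_{t+1})$, Lipschitzness of $F$ between $z_{t+1}$ and $z_{t+\frac12}$, and monotonicity of $N_\Z$ at $(z_t,z_{t+1})$---glued together by a sum-of-squares identity (Proposition~\ref{prop:identity}). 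The first of these (monotonicity of $F$ at full iterates) is absent from your scheme and is precisely what makes the certificate close; the step-size bound $\eta<\frac{1}{(1+\sqrt{2})L}$ emerges from the Lipschitz term in that SOS, not from a displacement recursion. So to repair your argument you would either have to bring in $\langle F(z_{t+1})-F(z_t),z_{t+1}-z_t\rangle\ge 0$ and work with a potential that carries $F(z_t)$ explicitly, or exhibit a genuinely new displacement-only identity---neither of which your proposal supplies.
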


We remark that the convergence rate of \ref{eq:RG} is slower than \ref{eq:ARG} and other optimal first-order algorithms even in the monotone setting. Nevertheless, understanding its last-iterate convergence rate is still interesting: (1) \ref{eq:RG} is simple and largely used in practice; 
(2) Last-iterate convergence rates of simple classic algorithms such as \ref{eq:EG} and \ref{eq:RG} are mentioned as open problems in~\citep{hsieh_convergence_2019}. The question is recently resolved for \ref{eq:EG}~\citep{gorbunov2022extragradient,cai2022finite} but remains open for \ref{eq:RG}; (3) Compared to \ref{eq:EG}, \ref{eq:RG} requires only a single call to $F$ and a single projection in each iteration.

We provide a proof sketch for Theorem~\ref{thm:last-RG} here and the full proof  in Appendix~\ref{appendix:RG}.
\paragraph{Proof Sketch.} Our analysis is based on a potential function argument and can be summarized in the following three steps. (1) We construct a potential function and show that it is non-increasing between two consecutive iterates; (2) We prove that the \eqref{eq:RG} algorithm has a best-iterate convergence rate, i.e., for any $T \ge 1$, there exists one iterate $t^* \in [T]$ such that our potential function at iterate $t^*$ is small; (3) We combine the above steps to show that the the last iterate has the same convergence guarantee as the best iterate and derive the  $O(\frac{1}{\sqrt{T}})$ last-iterate convergence rate.

\section{Conclusion}
This paper introduces single-call single-resolvent algorithms for non-monotone inclusion problems. We prove that \ref{OG} has $O(\frac{1}{\sqrt{T}})$ convergence rate for problems satisfying \emph{weak MVI} and design a new algorithm -- \ref{eq:ARG} that has the optimal $O(\frac{1}{T})$ convergence rate for problems satisfying negative comonotonicity. Finally, we resolve the problem of last-iterate convergence rate of \ref{eq:RG}.

\newpage
\subsubsection*{Acknowledgements}
Yang Cai is supported by a Sloan Foundation Research Fellowship and the NSF Award CCF-1942583 (CAREER). We thank the anonymous reviewers for their constructive comments. 
\bibliographystyle{iclr2023_conference}
\bibliography{references,ref}

\appendix
\newpage
\thispagestyle{empty}
\setcounter{tocdepth}{3}
\tableofcontents
\thispagestyle{empty}
\newpage
\section{Additional Related Works}
\label{appendix:related works}
\subsection{Convex-Concave and Monotone Setting}
In the convex-concave setting, a weak convergence measure is the \emph{gap} function (Definition~\ref{dfn:gap}). It is well-known that classic extragradient-type methods such as \ref{eq:EG} and \ref{eq:PEG} have $O(\frac{1}{T})$ average-iterate convergence rate in terms of gap function~\citep{nemirovski_prox-method_2004,nesterov_dual_2007,mokhtari2020convergence,hsieh_convergence_2019} and the rate is optimal~\citep{ouyang2021lower}. But the gap function or average-iterate convergence is not meaningful in the nonconvex-nonconcave setting. For convergence in terms of the residual in the constrained setting, \ref{eq:EG} and \ref{eq:PEG} has a slower rate of $O(\frac{1}{\sqrt{T}})$ for  best-iterate convergence~\citep{korpelevich_extragradient_1976,popov_modification_1980,facchinei_finite-dimensional_2003,hsieh_convergence_2019} and the more desirable last-iterate convergence~\citep{cai2022finite,gorbunov2022last}. We remark that the last-iterate convergence rate of the reflected gradient~\eqref{eq:RG} algorithm was unknown. The $O(\frac{1}{\sqrt{T}})$ rate is tight for $p$-SCIL algorithms~\citep{golowich2020tight}, a subclass of first-order methods that includes \ref{eq:EG}, \ref{eq:PEG}, and many of its variations, but faster rate is possible for other first-order methods.  


\paragraph{Accelerated Convergence Rate in Residual.} 
Recent results with accelerated convergence rates in terms of the residual are based on Halpern iteration~\citep{halpern1967fixed} (also called \emph{Anchoring}). The vanilla Halpern iteration has $O(\frac{1}{T})$ convergence rate for cocoercive operators (stronger than monotonicity)~\citep{diakonikolas2020halpern,kim_accelerated_2021}. Recently, a line of works contributed to provide $O(\frac{1}{T})$ convergence rate for monotone operators in the constrained setting. \cite{diakonikolas2020halpern,yoon2022accelerated} provide double-loop algorithms with $O(\frac{\log T}{T})$ convergence rate for monotone operators in the constrained setting. In the unconstrained setting ($A = 0$), \cite{yoon2021accelerated} propose the Extra Anchored Gradient (EAG) algorithm, the first efficient algorithm with $O(\frac{1}{T})$ convergence rate for monotone operators. They also establish matching lower bound for first-order methods. \cite{lee2021fast} generalize EAG to Fast Extragradient (FEG), which works even for negatively comonotone operators but still in the unconstrained setting. Analysis for variants of EAG and FEG in the unconstrained setting is provided in \citep{tran-dinh_halpern-type_2021,tran-dinh_connection_2022}. Recently, \cite{cai2022accelerated} close the open problem by proving the projected version of EAG has $O(\frac{1}{T})$ convergence rate. They also propose the accelerated forward-backward splitting (AS) algorithm, a generalization of FEG, which has $O(\frac{1}{T})$ convergence rate for negatively comonotone operators in the constrained setting.

\subsection{Nonconvex-Nonconcave Setting}
This paper study structured nonconvex-nonconcave optimization problems from the general perspective of operator theory and focus on global convergence under weak MVI and negative comonotonicity. There is a line of works focusing on \emph{local convergence}, e.g., ~\citep{heusel2017gans,mazumdar2019finding,jin2020local,fiez2021local}. Another line of works focus on problems satisfying different structural assumptions, such as the Polyak Łojasiewicz condition~\citep{nouiehed2019solving,yang2020global}. 

\section{Additional Preliminary}
\label{appendix:pre}
\subsection{Resolvent and Proximal Operator}
When $A = \partial g$ is the subdifferential operator of a lower semi-continuous, proper, and convex function $f$, its resolvent $(I+\lambda \partial g)^{-1}$ is also known as the \emph{proximal operator} of $g$ denoted as $\mathbf{prox}_{\lambda g}$. The resolvent $(I+\lambda \partial g)^{-1}$ is efficiently computable for the following popular choices of function $g$:  $\ell_1$-norm $|| \cdot ||_1$, $\ell_2$-norm $||\cdot||_2$, maxtrix norms, the log-barrier $-\sum_{i=1}^n \log (x_i)$, and more generally any quadratic or smooth functions. 
Moreover, many of them have closed-form expressions. For example, the proximal operator of the $\ell_1$-norm $g = || \cdot ||_1$ is the element-wise \emph{soft-thresholding} operator $(\mathbf{prox}_{\lambda g}(v))_i = (v_i-\lambda)_{+} - (-v_i-\lambda)_{+}$.  We refer readers to \citep[Chapter 6, 7]{parikh_proximal_2014} for a comprehensive review on proximal operators and their efficient computation.

\subsection{Gap Function}
A standard suboptimality measure for the variationaly inequalitt \eqref{VI} problem is the \emph{gap function} defined as $\gap_{\Z,F}(z):=\max_{z'\in\Z}\InAngles{F(z), z - z'}$. Note that when the feasible set $\Z$ is unbounded, approximating the gap function is impossible: consider the simple unconstrained saddle point problem $\min_{x\in \R} \max_{y\in\R} x y$, which has a unique saddle point $(0,0)$ but any other point has an infinitely large gap. A refined notion is the following restricted gap function~\citep{nesterov_dual_2007}, which is meaningful for unbounded $\Z$.
\begin{definition}[Restricted Gap Function]
\label{dfn:gap}
Given a closed convex set $\Z$, a single-valued operator $F$, and a radius $D$, the restricted gap function at point $z \in \Z$ is 
\begin{equation*}
    \gap_{\Z,F,D} := \max_{z'\in\Z \cap \mathcal{B}(z,D)}\InAngles{F(z), z - z'}
\end{equation*}
where $\mathcal{B}(z,D)$ is a Euclidean ball centered at $z$ with radius $D$.
\end{definition}
In the rest of the paper, we call $\gap_{\Z,F,D}$ the gap function (or gap) for convenience. The following Lemma relates $\InNorms{F(z)+c}$ where $c \in N_\Z(z)$, and the gap function.

\begin{lemma}
\label{lem:gap}
Let $\Z$ be a closed convex set $\Z$ and $F$ be a monotone and $L$-Lipschitz operator. For any $z \in \Z$ and $c \in N_\Z(z)$, we have 
\begin{align*}
    \gap_{\Z, F, D}(z) := \max_{z' \in \Z \cap \mathcal{B}(z,D)} \InAngles{F(z), z -z'} \le D \cdot \InNorms{F(z) + c}.
\end{align*}
\end{lemma}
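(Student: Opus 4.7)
The plan is to bound $\InAngles{F(z), z - z'}$ uniformly over $z' \in \Z \cap \mathcal{B}(z,D)$ by splitting $F(z) = (F(z)+c) - c$, discarding the $c$-term using the defining property of the normal cone, and applying Cauchy--Schwarz together with the ball constraint $\InNorms{z-z'}\le D$.

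More concretely, I would fix an arbitrary $z' \in \Z \cap \mathcal{B}(z,D)$ and write the identity
\begin{equation*}
\InAngles{F(z), z - z'} = \InAngles{F(z)+c, z - z'} - \InAngles{c, z - z'}.
\end{equation*}
Since $c \in N_\Z(z)$, the definition $N_\Z(z) = \{a : \InAngles{a, z'' - z} \le 0, \forall z'' \in \Z\}$ applied with $z'' = z'$ gives $\InAngles{c, z' - z} \le 0$, i.e., $-\InAngles{c, z - z'} \le 0$. Dropping this nonpositive term yields
\begin{equation*}
\InAngles{F(z), z - z'} \le \InAngles{F(z)+c, z - z'}.
\end{equation*}

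Next I would apply Cauchy--Schwarz to the right-hand side to obtain $\InAngles{F(z)+c, z-z'} \le \InNorms{F(z)+c}\cdot\InNorms{z-z'}$, and then use $\InNorms{z-z'}\le D$ from $z' \in \mathcal{B}(z,D)$ to get $\InAngles{F(z), z-z'} \le D\cdot\InNorms{F(z)+c}$. Taking the maximum over $z' \in \Z \cap \mathcal{B}(z,D)$ on the left gives exactly $\gap_{\Z,F,D}(z) \le D\cdot\InNorms{F(z)+c}$.

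There is no substantive obstacle here — in particular, the monotonicity and $L$-Lipschitz hypotheses on $F$ are not actually used; the argument relies only on the normal-cone inequality and Cauchy--Schwarz. The only thing to be careful about is the sign convention in the definition of $N_\Z$, so I would double-check that $\InAngles{c, z'-z}\le 0$ (and hence $\InAngles{c, z-z'}\ge 0$) is indeed the correct orientation before presenting the chain of inequalities.
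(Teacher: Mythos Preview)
Your proposal is correct and matches the paper's own proof essentially line for line: add $c$ using the normal-cone inequality $\InAngles{c, z-z'}\ge 0$, apply Cauchy--Schwarz, and use $\InNorms{z-z'}\le D$. Your observation that monotonicity and $L$-Lipschitzness are not needed is also accurate.
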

\begin{proof}
The proof is straightforward. Since $c \in N_\Z(z)$, we have $\InAngles{c, z -z'}\ge 0$ for any $z' \in \Z$. Therefor, 
\begin{align*}
    \max_{z' \in \Z \cap \mathcal{B}(z,D)} \InAngles{F(z), z -z'} & \le  \max_{z' \in \Z \cap \mathcal{B}(z,D)} \InAngles{F(z) + c, z -z'} \\
    & \le \max_{z' \in \Z \cap \mathcal{B}(z,D)} \InNorms{z -z'} \cdot \InNorms{F(z) + c} \tag{Cauchy-Schwarz inequality} \\
    & \le D \cdot \InNorms{F(z) + c}. 
\end{align*}
\end{proof}

\subsection{Classical Algorithms for Variationaly Inequalities}
\paragraph{The Extragradient Algorithm~\citep{korpelevich_extragradient_1976}.} 
Starting at initial point $z_0 \in \Z$, the update rule of \ref{eq:EG} is: for $t =0, 1, 2, \cdots$ 
\begin{equation}
\label{eq:EG}
\tag{EG}
    \begin{aligned}
        z_{t+\half} &= \Pi_\Z \InBrackets{z_t - \eta F(z_t)}, \\
        z_{t+1}     &= \Pi_\Z \InBrackets{z_t - \eta F(z_{t+\half})}.
    \end{aligned}
\end{equation}
At each step $t \ge 0$, the \ref{eq:EG} algorithm makes an oracle call of $F(z_t)$ to produce an intermediate point $z_{t+\half}$ (a gradient descent step if $F = \partial f$ is the gradient of some function $f$), then the algorithm makes another oracle call $F(z_{t+\half})$ and updates $z_t$ to $z_{t+1}$. In each step, EG needs two oracle calls to $F$ and two projections $\Pi_\Z$. 

\paragraph{The Past Extragradient Algorithm~\citep{popov_modification_1980}} Starting at initial point $z_0 = z_{-\half} \in \Z$, the update rule of \ref{eq:PEG} with step size $\eta > 0$ is: for $t =0, 1, 2, \cdots$ 
\begin{equation}
\label{eq:PEG}
\tag{PEG}
    \begin{aligned}
        z_{t+\half} &= \Pi_\Z \InBrackets{z_t - \eta F(z_{t-\half})}, \\
        z_{t+1}     &= \Pi_\Z \InBrackets{z_t - \eta F(z_{t+\half})}.
    \end{aligned}
\end{equation}
Note that \ref{eq:PEG} is also known as the \emph{Optimistic Gradient Descent/Ascent (OGDA)} algorithm in the literature. The update rule of \ref{eq:PEG} is similar to \eqref{eq:EG} but only requires a single call to $F$ in each iteration.  Both of \ref{eq:EG} and \ref{eq:PEG} perform two projections in every iteration. 
\notshow{
\paragraph{Reflected Gradient Methods.} Starting at initial points $z_{-1} = z_0 \in \Z$, the update rule of the \emph{Reflected Gradient} (RG) algorithm \citep{chambolle_first-order_2011,malitsky_projected_2015,cui2016analysis} is as follows: for $t = 0, 1, 2,\cdots$
\begin{equation}
\label{eq:RG}
\tag{RG}
    \begin{aligned}
        z_{t+\half} &= 2z_t - z_{t-1} \\
        z_{t+1}     &= \Pi_\Z \InBrackets{z_t - \eta F(z_{t+\half})}
    \end{aligned}
\end{equation}
Note that the update rule of \eqref{eq:RG} can be equivalently written as $z_{t+1} = \Pi_\Z \InBrackets{z_t - \eta F(2z_t - z_{t-1})}$. We introduce the intermediate point $z_{t+\half}$ to emphasize that the main difference between \eqref{eq:EG} and \eqref{eq:RG} is the computation of the intermediate point $z_{t+\half}$: \eqref{eq:RG} simply updates $z_{t+\half} = 2z_t - z_{t-1}$ and does not make an oracle call nor a projection. Consequently, in each step \eqref{eq:RG} only needs one oracle call $F$ and thus belongs to the family of single-call extra-gradient methods. We remark that \eqref{eq:RG} is more efficient compared to the \emph{Optimistic Gradient Descent-Ascent} (OGDA) algorithm which is single-call but performs two projections.\footnote{OGDA is also referred to as the \emph{Past Extra-Gradient} (PEG) method~\citep{hsieh_convergence_2019}.}

\paragraph{Accelerated Reflected Gradient Methods.} For variational inequality problem with monotone operators, classical extra-gradient type algorithms only have slow $O(\frac{1}{\sqrt{T}})$ last-iterate convergence rate in residual and it can not be improved within a large class of algorithms called \emph{p-SLI} \citep{golowich_last_2020,golowich_tight_2020} including EG and OGDA.
Recently, first-order methods with accelerated and optimal $O(\frac{1}{T})$ convergence rate with respect to the gradient norm or residual are proposed in both the unconstrained setting \citep{yoon_accelerated_2021,lee_fast_2021,tran2022connection} and the constrained setting \citep{cai2022accelerated}. We propose the following Accelerated Reflected Gradient~\eqref{eq:ARG} algorithm, which is a single-call single-projection first-order method. Given arbitrary initial points $z_0 = z_{\half}\in \R^n$ and step size $\eta > 0$, \ref{eq:ARG} sets $z_1 = J_{\eta A}\InBrackets{z_0 - \eta F(z_0)}$ and updates for $t = 1,2,\cdots$
\begin{equation}
\label{eq:ARG}
\tag{ARG}
    \begin{aligned}
        z_{t+\half} &= 2z_t - z_{t-1} + \frac{1}{t+1}(z_0 - z_t) - \frac{1}{t}(z_0 - z_{t-1})\\
        z_{t+1}     &= J_{\eta A} \InBrackets{z_t - \eta F(z_{t+\half}) + \frac{1}{t+1}(z_0 -z_t)}
    \end{aligned}
\end{equation}
We show that the \ref{eq:ARG} algorithm enjoys an $O(\frac{1}{T})$ last-iterate convergence rate in the residual for solving inclusion problems with negative comonotone operators. This upper bound matches the lower bound $\Omega(\frac{1}{T})$ for all first-order methods \citep{diakonikolas2020halpern,yoon_accelerated_2021}. 
}

\subsection{Tangent Residual Upper Bounds Other Notions of Residual}\label{appendix:fact-residual}
\begin{proposition}
\label{prop:residual}
    Let $A$ be a maximally monotone operator and $F$ be an single-valued operator. Then for any $z \in \R^n$ and $\alpha > 0$, 
    \begin{align*}
        &r^{tan}_{F,A}(z) \ge r^{nat}_{F,A}(z) := \InNorms{z - J_{A}(z - F(z))} \\ 
        &r^{tan}_{F,A}(z) \ge r^{fb}_{F,A,\alpha}(z):=\frac{1}{\alpha} \InNorms{z - J_{\alpha A}[z - \alpha F(z)]}.
    \end{align*}
\end{proposition}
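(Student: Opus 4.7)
The proof plan is to leverage the non-expansiveness of the resolvent $J_A$ (and $J_{\alpha A}$) combined with the characterization of the resolvent given in Proposition~\ref{prop:JA}(2). The key identity is that for any $c \in A(z)$, we have $z = J_A(z + c)$, which rewrites $z$ itself as a resolvent evaluation. This lets us compare $z$ and $J_A(z - F(z))$ as two values of the non-expansive map $J_A$.

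\textbf{Step 1 (first inequality).} Let $c^* \in A(z)$ be a minimizer so that $r^{tan}_{F,A}(z) = \|F(z) + c^*\|$ (such a minimizer exists because $A(z)$ is nonempty closed and convex when $A$ is maximally monotone). By Proposition~\ref{prop:JA}(2), since $c^* \in A(z)$, we have $z = J_A(z + c^*)$. Therefore, using $1$-Lipschitzness of $J_A$,
\begin{align*}
r^{nat}_{F,A}(z) = \|z - J_A(z - F(z))\| &= \|J_A(z + c^*) - J_A(z - F(z))\| \\
&\le \|(z + c^*) - (z - F(z))\| = \|F(z) + c^*\| = r^{tan}_{F,A}(z).
\end{align*}

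\textbf{Step 2 (second inequality).} Since $c^* \in A(z)$ implies $\alpha c^* \in \alpha A(z)$, Proposition~\ref{prop:JA}(2) applied to $\alpha A$ gives $z = J_{\alpha A}(z + \alpha c^*)$. Then by $1$-Lipschitzness of $J_{\alpha A}$,
\begin{align*}
\alpha \cdot r^{fb}_{F,A,\alpha}(z) = \|z - J_{\alpha A}(z - \alpha F(z))\| &= \|J_{\alpha A}(z + \alpha c^*) - J_{\alpha A}(z - \alpha F(z))\| \\
&\le \|\alpha c^* + \alpha F(z)\| = \alpha \cdot r^{tan}_{F,A}(z),
\end{align*}
and dividing by $\alpha > 0$ yields the desired inequality.

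The argument is essentially a one-line computation in each case, so there is no real obstacle. The only subtle point worth checking carefully is the existence of a minimizer $c^* \in A(z)$ achieving the defining infimum of $r^{tan}_{F,A}(z)$; this follows from the fact that when $A$ is maximally monotone, $A(z)$ is closed and convex, so the projection of $-F(z)$ onto $A(z)$ exists and attains the minimum. If one prefers to avoid this existence claim, the inequalities can be stated with an infimum and proved by passing to a sequence $c_k \in A(z)$ with $\|F(z) + c_k\| \to r^{tan}_{F,A}(z)$, with the same non-expansiveness step applied to each $c_k$ and then taking limits.
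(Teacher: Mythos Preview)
Your proof is correct and essentially identical to the paper's: both use the identity $z = J_A(z+c)$ for $c \in A(z)$ (Proposition~\ref{prop:JA}(2)) together with non-expansiveness of the resolvent. The only cosmetic difference is that the paper works with an arbitrary $c \in A(z)$ and then minimizes, whereas you pick a minimizer $c^*$ upfront; your added remark on the existence of $c^*$ (or the sequential workaround) is a nice touch but not needed for the paper's formulation.
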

\begin{proof}
For any $c \in A(z)$, we have 
\begin{align*}
    r^{nat}_{F,A}(z) &= \InNorms{z - J_{A}(z - F(z))} \\
    & = \InNorms{J_A(z + c) - J_{A}(z - F(z))} \\
    & \le \InNorms{F(z) + c} \tag{$J_A$ is non-expansive}
\end{align*}
and 
\begin{align*}
    r^{fb}_{F,A,\alpha}(z) &= \frac{1}{\alpha}\InNorms{z - J_{\alpha A}(z - \alpha F(z))} \\
    & = \frac{1}{\alpha} \InNorms{J_{\alpha A}(z + \alpha c) - J_{\alpha A}(z - \alpha F(z))} \\
    & \le \InNorms{F(z) + c} \tag{$J_A$ is non-expansive}.
\end{align*}
Thus both $r^{tan}_{F,A}(z)$ and $r^{fb}_{F,A,\alpha}(z)$ are smaller than $r^{tan}_{F,A}(z)= \displaystyle \min_{c \in A(z)} \InNorms{F(z) + c}$.
\end{proof}

\section{Missing Proofs in Section~\ref{sec:ARG}}
\label{appendix:ARG}
To prove Theorem~\ref{thm:last-ARG}, we apply a potential function argument. We first show the potential function is approximately non-increasing and then prove that it is upper bounded by a term independent of $T$. As the potential function at step $t$ is also at least $\Omega(t^2) \cdot r^{tan}(z_t)^2$, we conclude that \ref{eq:ARG} has an $O(\frac{1}{T})$ convergence rate .
\subsection{Potential Function}
Recall the update rule of \ref{eq:ARG}: $z_0 = z_\half \in \R^n$ are initial points and $z_1 = J_{\eta A}\InBrackets{z_0 - \eta F(z_0)}$; for $t \ge 1$,
\begin{equation}
\tag{ARG}
    \begin{aligned}
        z_{t+\half} &= 2z_t - z_{t-1} + \frac{1}{t+1}(z_0 - z_t) - \frac{1}{t}(z_0 - z_{t-1}),\\
        z_{t+1}     &= J_{\eta A} \InBrackets{z_t - \eta F(z_{t+\half}) + \frac{1}{t+1}(z_0 -z_t)}.
    \end{aligned}
\end{equation}
Recall that when $A$ is the normal cone of a closed convex set $\Z$, the resolvent $J_A$ is equivalent to Euclidean projection to set $\Z$. Hence, if we apply the \ref{eq:ARG} algorithm to solve monotone \ref{VI} problems, the algorithm uses a single call to operator $F$ and a single projection to $\Z$ per iteration. Here we allow $A$ to be an arbitrary maximally monotone operator, and the \ref{eq:ARG} algorithm becomes a single-call single-resolvent algorithm in this more general setting. 
 
Next, we specify the potential function.  Define 
\begin{align}
\label{eq:c-ARG}
    c_{t+1} :=  \frac{z_t - \eta F(z_{t+\half}) + \frac{1}{t+1}(z_0 -z_t) - z_{t+1}}{\eta}, \quad \forall t \ge 0.
\end{align}
By update rule we have $c_t \in A(z_t)$ for all $t\ge 1$. The potential function at iterate $t \ge 1$ is defined as 
\begin{align}
\label{eq:potential-ARG}
    V_t := \frac{t(t+1)}{2} \InNorms{\eta F(z_t) + \eta c_t}^2 + \frac{t(t+1)}{2} \InNorms{\eta F(z_t) - \eta F(z_{t-\half})}^2 + t \InAngles{\eta F(z_t) + \eta c_t, z_t - z_0}.
\end{align}

\subsection{Approximately Non-Increasing Potential}

\begin{fact}
\label{fact:eta}
For any $L > 0$ and $\rho \ge - \frac{1}{60L}$. There exists $\eta > 0$ such that 
\begin{align}
\label{eq:eta}
    \frac{1}{2} - (12-\frac{4\rho}{\eta})\eta^2 L^2 + \frac{2\rho}{\eta} \ge 0.
\end{align}
Moreover, every $\eta > 0$ satisfies \eqref{eq:eta} also satisfies $\frac{\rho}{\eta} \ge -\frac{1}{4}$.
\end{fact}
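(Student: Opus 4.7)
The statement has two parts --- existence of $\eta > 0$ satisfying Inequality~\eqref{eq:eta}, and the implication that any such $\eta$ automatically satisfies $\rho/\eta \ge -\tfrac{1}{4}$. I would dispatch the implication first, since it is purely algebraic and motivates why the bound on $\rho$ in the existence part is calibrated as it is.

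For the ``moreover'' part, the plan is to substitute $u := \rho/\eta$ and rewrite Inequality~\eqref{eq:eta} as
\begin{equation*}
    \tfrac{1}{2} + 2u \;\ge\; (12 - 4u)\,\eta^2 L^2.
\end{equation*}
Under the standing hypothesis $\rho \in [-\tfrac{1}{60L}, 0]$ from Theorem~\ref{thm:last-ARG} we have $u \le 0$, so $12 - 4u \ge 12 > 0$ and the right-hand side is nonnegative; therefore $\tfrac{1}{2} + 2u \ge 0$, i.e., $u \ge -\tfrac{1}{4}$, as claimed.

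For existence, my plan is to exhibit a single explicit step size that works uniformly over $\rho \in [-\tfrac{1}{60L}, 0]$, rather than optimizing $\eta$ as a function of $\rho$. I would try $\eta = \tfrac{1}{8L}$, for which $\eta^2 L^2 = \tfrac{1}{64}$, $\tfrac{2\rho}{\eta} = 16\rho L$, and $\tfrac{4\rho}{\eta}\cdot\eta^2 L^2 = \tfrac{\rho L}{2}$. The left-hand side of Inequality~\eqref{eq:eta} then collapses to $\tfrac{5}{16} + \tfrac{33\,\rho L}{2}$, which is nonnegative exactly when $\rho L \ge -\tfrac{5}{264}$. Comparing over the common denominator $1320$ ($\tfrac{1}{60} = \tfrac{22}{1320}$ versus $\tfrac{5}{264} = \tfrac{25}{1320}$) gives $-\tfrac{1}{60} \ge -\tfrac{5}{264}$, so every $\rho \ge -\tfrac{1}{60L}$ satisfies the bound and $\eta = \tfrac{1}{8L}$ suffices. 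There is no substantive obstacle here: the only judgment call is picking a clean $\eta$, guided by the observation that the left-hand side of Inequality~\eqref{eq:eta} (for $\rho < 0$) tends to $-\infty$ as $\eta \to 0^+$ via $\tfrac{2\rho}{\eta}$ and as $\eta \to \infty$ via $-12\eta^2 L^2$, so one must aim near the interior maximum; $\eta \asymp \tfrac{1}{L}$ is the right order from dimensional analysis, and $\tfrac{1}{8L}$ happens to give particularly clean arithmetic.
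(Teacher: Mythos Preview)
Your proof is correct and follows essentially the same approach as the paper's: exhibit an explicit step size of order $1/L$ (you use $\eta=\tfrac{1}{8L}$, the paper uses $\eta=\tfrac{1}{12L}$) and verify Inequality~\eqref{eq:eta} by direct substitution; for the ``moreover'' part, both arguments are elementary algebraic manipulations of the inequality. Your shortcut for the implication---observing that the right side $(12-4u)\eta^2L^2$ is nonnegative when $u\le 0$, forcing $\tfrac12+2u\ge 0$---is slightly slicker than the paper's route of solving for $\rho/\eta$ and bounding the resulting rational function, though it leans on the extra hypothesis $\rho\le 0$ from Theorem~\ref{thm:last-ARG}; when $\rho>0$ (permitted by the Fact's hypothesis as stated) the conclusion $\rho/\eta\ge -\tfrac14$ is trivial anyway, so nothing is lost.
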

\begin{proof}
    Rewriting \eqref{eq:eta}, we get 
    \begin{align*}
        \rho > \frac{\eta L (24\eta^2 L^2 -1)}{4 + 8\eta^2 L^2} \cdot \frac{1}{L}.
    \end{align*}
    Let $x = \eta L$ and $f(x) = \frac{x(24x^2-1)}{4+8x^2}$. Since $f(\frac{1}{12}) = -\frac{5}{292} < -\frac{1}{60}$. We know $\eta = \frac{1}{12L}$ satisfies \eqref{eq:eta}. 
    
    Moreover, rewritng \eqref{eq:eta} and using $\eta L > 0$, we get 
    \begin{align*}
        \frac{\rho}{\eta} \ge -\frac{1-72\eta^2 L^2}{4 + 8\eta^2 L^2} \ge -\frac{1}{4}.
    \end{align*}
\end{proof}

We show in the following lemma that $V_t$ is approximately non-increasing.
\begin{lemma}
\label{lem:near-monotone potential}
    In the same setup as \Cref{thm:last-ARG}, for any $t \ge 1$, we have
    \begin{align*}
        V_{t+1} \le V_t + \frac{1}{8} \cdot\InNorms{\eta F(z_{t+1}) + \eta c_{t+1}}^2.
    \end{align*}
\end{lemma}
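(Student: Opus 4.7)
The plan is to bound $V_{t+1} - V_t$ by expanding each of the three terms in $V_{\cdot}$, re-expressing the $\eta(F+c)$ quantities using the update rule, and then applying the comonotonicity of $E = F+A$ together with the Lipschitzness of $F$. Throughout, write $u_s := \eta F(z_s) + \eta c_s$ for $s \ge 1$, and $w_s := \eta F(z_s) - \eta F(z_{s-\half})$. Then
\[
V_s = \tfrac{s(s+1)}{2}\InNorms{u_s}^2 + \tfrac{s(s+1)}{2}\InNorms{w_s}^2 + s\InAngles{u_s, z_s - z_0}.
\]

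First, I would establish the key algebraic identity produced by the update rule. From \eqref{eq:c-ARG} and the update of \ref{eq:ARG},
\[
u_{t+1} \;=\; \eta F(z_{t+1}) + \bigl(z_t - \eta F(z_{t+\half}) + \tfrac{1}{t+1}(z_0 - z_t) - z_{t+1}\bigr),
\]
so
\begin{equation*}
z_t - z_{t+1} = u_{t+1} + \eta\bigl(F(z_{t+\half}) - F(z_{t+1})\bigr) - \tfrac{1}{t+1}(z_0 - z_t).
\end{equation*}
This is the central ``one-step'' identity; it relates the displacement $z_t - z_{t+1}$ to $u_{t+1}$, to an $L$-Lipschitz-controllable residual, and to an anchoring term $\tfrac{1}{t+1}(z_0 - z_t)$. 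An analogous identity holds at step $t$.

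Next, I would expand $V_{t+1} - V_t$ linearly in $s = t+1$ versus $s = t$. The $\|u_s\|^2$ part contributes $\tfrac{t(t+1)}{2}(\InNorms{u_{t+1}}^2 - \InNorms{u_t}^2) + (t+1)\InNorms{u_{t+1}}^2$, and the inner-product part contributes $t\InAngles{u_{t+1} - u_t, z_{t+1} - z_0} + t\InAngles{u_t, z_{t+1}-z_t} + \InAngles{u_{t+1}, z_{t+1} - z_0}$. The comonotonicity assumption, applied at the pair $(z_{t+1}, c_{t+1})$ and $(z_t, c_t)$ in $\Graph(E)$, yields
\[
\InAngles{u_{t+1} - u_t,\; z_{t+1} - z_t} \;\ge\; \tfrac{\rho}{\eta}\InNorms{u_{t+1} - u_t}^2,
\]
which, combined with the polarization identity $2\InAngles{u_{t+1}-u_t, u_{t+1}} = \InNorms{u_{t+1}}^2 - \InNorms{u_t}^2 + \InNorms{u_{t+1}-u_t}^2$, should convert the $\InNorms{u_s}^2$ difference into a usable bound. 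Substituting the displacement identity for $z_{t+1} - z_t$ into $\InAngles{u_t, z_{t+1}-z_t}$ produces cross terms of the form $\InAngles{u_t, u_{t+1}}$, anchoring terms $\InAngles{u_t, z_0 - z_{t+1}}$ (which, after summing with the inner-product contribution above, telescope by means of $\InAngles{u_{t+1} - u_t, z_{t+1}- z_0}$), and a cross term $\InAngles{u_t, \eta(F(z_{t+\half}) - F(z_{t+1}))}$ to be controlled by Lipschitzness.

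The $\tfrac{t(t+1)}{2}\InNorms{w_s}^2$ piece is the compensator: $w_{t+1} = \eta F(z_{t+1}) - \eta F(z_{t+\half})$ is exactly the Lipschitz residual appearing in the displacement identity, so including it in $V_s$ will allow me to absorb $\|\eta(F(z_{t+\half}) - F(z_{t+1}))\|^2$ terms. To bound $\InNorms{w_{t+1}}^2$ against something telescoping, I would expand $z_{t+\half} - z_{t+1}$ using both the definition of $z_{t+\half}$ (which depends on $z_t$ and $z_{t-1}$ plus anchoring corrections) and the displacement identity, so that $\InNorms{\eta(F(z_{t+\half}) - F(z_{t+1}))}^2 \le \eta^2 L^2 \InNorms{z_{t+\half} - z_{t+1}}^2$ decomposes into $\InNorms{u_{t+1}}^2$-type terms, $\InNorms{w_{t+1}}^2$-type terms, and anchoring contributions, each scaled by $\eta^2 L^2$.

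Collecting the $\InNorms{u_{t+1}}^2$ coefficients across all pieces, the step-size condition \eqref{eq:eta}, namely $\tfrac{1}{2} - (12 - \tfrac{4\rho}{\eta})\eta^2 L^2 + \tfrac{2\rho}{\eta} \ge 0$, is exactly what is needed to make the net $\InNorms{u_{t+1}}^2$ coefficient at most $\tfrac{1}{8}$ (with the other leftover terms being nonpositive); Fact~\ref{fact:eta} also guarantees $\tfrac{\rho}{\eta} \ge -\tfrac{1}{4}$, which handles the sign of the comonotonicity term. The anchoring cross terms $\tfrac{1}{t+1}\InAngles{u_{t+1}, z_0 - z_t}$ should exactly match a shift of the inner-product component of $V_t$ so that the net contribution collapses.

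The main obstacle will be the bookkeeping: there are many $O(1/t)$ corrections coming from the anchoring, and it is crucial that they telescope cleanly into the $t\InAngles{u_s, z_s - z_0}$ term rather than leaving an $O(1/t)$ residue, since otherwise the inductive sum would blow up. The right way to organize the calculation is to group all anchoring contributions first, verify they reproduce the shift $s \mapsto s+1$ in the inner-product term exactly, and only then estimate the remaining Lipschitz/comonotonicity terms using \eqref{eq:eta}. This follows the potential-function strategy of \citep{cai2022accelerated}, but with the \ref{eq:RG}-style single-call update the $w_{t+1}$ compensator now plays the role that a second operator call played in their analysis, which is why including $\tfrac{t(t+1)}{2}\InNorms{w_t}^2$ in $V_t$ is essential.
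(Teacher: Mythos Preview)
Your plan uses the same two ingredients as the paper---$\rho$-comonotonicity of $F+A$ applied to the pair $(z_t,c_t)$, $(z_{t+1},c_{t+1})$, and $L$-Lipschitzness of $F$ applied to $z_{t+1}-z_{t+\frac12}$---and the same displacement identities. Where the paper differs is that it does \emph{not} expand $V_{t+1}-V_t$ term-by-term as you propose; instead it packages the entire expansion into a single computer-verified identity (Proposition~\ref{prop:identity}), subtracts $t(t+1)$ times the comonotonicity inequality and $\tfrac{t(t+1)}{4p}$ times the Lipschitz inequality (with a free parameter $p$), and obtains an explicit decomposition into five pieces. Your direct route is viable in principle but the bookkeeping you flag as ``the main obstacle'' is exactly what the paper avoids via the {\sc Matlab} check.

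One point in your plan is miscalibrated: the $\tfrac18$ does \emph{not} come from the step-size condition \eqref{eq:eta}. In the paper the decomposition produces a leftover cross term $(t+1)\langle -w_{t+1},\,u_{t+1}\rangle$ paired with a $\|w_{t+1}\|^2$ term whose coefficient is governed by the free parameter $p$; completing the square there, and then choosing $p=\tfrac{1}{24}$, yields $-\tfrac{2p}{1-8p}\|u_{t+1}\|^2=-\tfrac18\|u_{t+1}\|^2$. The step-size condition \eqref{eq:eta} is used for a \emph{separate} task: certifying that the remaining block (the paper's expressions \eqref{eq:sos-1}, \eqref{eq:sos-4}, \eqref{eq:sos-5}) is nonnegative. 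That step in turn hinges on the algebraic identity $z_{t+1}-z_{t+\frac12} = (u_t-w_t)-(u_{t+1}-w_{t+1})$, which forces $B_1-B_2=2(B_3-B_4)$ in the paper's notation and lets one collapse four quadratic terms into a single square whose coefficient is exactly the left side of \eqref{eq:eta}. If you pursue the direct expansion, you will need both this identity and an analogue of the parameter $p$ to disentangle the cross term from the rest; without them the $\|u_{t+1}\|^2$ coefficient will not come out to $\tfrac18$.
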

\begin{proof}
    The plan is to show that $V_t - V_{t+1}$ plus a few non-positive terms is still $\ge -\frac{1}{8} \cdot\InNorms{\eta F(z_{t+1}) + \eta c_{t+1}}^2$, which certifies the claim.
    \paragraph{Two Positive Terms.} Since $F+A$ is $\rho$-comonotone, we have
    \begin{equation}
    \label{eq:near-monotone-proof-1}
        \InAngles{\eta F(z_{t+1}) + \eta c_{t+1} - \eta F(z_t) - \eta c_t, z_{t+1} - z_t} - \frac{\rho}{\eta} \InNorms{\eta F(z_{t+1}) + \eta c_{t+1} - \eta F(z_t) - \eta c_t}^2 \ge  0.
    \end{equation}
    Since $F$ is $L$-Lipschitz, we have
    \begin{equation*}
        \eta^2 L^2 \cdot \InNorms{z_{t+1} - z_{t+\half}}^2 - \InNorms{\eta F(z_{t+1}) - \eta F(z_{t+\half})}^2 \ge 0.
    \end{equation*}
    Denote $p = \frac{1}{24}$. Multiplying the above inequality with $1-\frac{\rho}{3\eta} > 0$ and rearranging terms, we get
    \begin{align}
    \label{eq:near-monotone-proof-2}
        &p \cdot \InNorms{z_{t+1} - z_{t+\half}}^2 - \InNorms{\eta F(z_{t+1}) - \eta F(z_{t+\half})}^2 \notag \\
        & + \InParentheses{(1-\frac{\rho}{3\eta})\eta^2 L^2- p} \cdot \InNorms{z_{t+1} - z_{t+\half}}^2 + \frac{\rho}{3\eta} \InNorms{\eta F(z_{t+1}) - \eta F(z_{t+\half})}^2  \ge 0.
    \end{align}

    \paragraph{Sum-of-Squares Identity.} We show an equivalent formulation  $z_{t+\half}$ and $z_{t+1}$ using definitions of $\eta c_t = z_{t-1} -  z_t - \eta F(z_{t-\half}) + \frac{1}{t}(z_0 - z_{t-1})$ and $\eta c_{t+1} = z_t - \eta F(z_{t+\half}) + \frac{1}{t+1}(z_0 - z_t) - z_{t+1}$:
    \begin{align*}
         z_{t+\half} &= 2z_t - z_{t-1} + \frac{1}{t+1}(z_0 - z_t) - \frac{1}{t}(z_0 - z_{t-1})\\
         &=z_t + (z_t - z_{t-1}) + \frac{1}{t+1}(z_0 - z_t) - \frac{1}{t}(z_0 - z_{t-1}) \\
         &= z_t -\eta F(z_{t-\half}) - \eta c_t +  \frac{1}{t+1}(z_0 - z_t),\\
        z_{t+1}     &= z_t - \eta F(z_{t+\half}) - \eta c_{t+1} + \frac{1}{t+1}(z_0 -z_t). 
    \end{align*}
    We also have 
    \begin{align}
    \label{eq:z_t+1 - z_t+1/2}
        z_{t+1} - z_{t+\half} = \eta F(z_{t-\half}) + \eta c_t - \eta F(z_{t+\half}) - \eta c_{t+1}.
    \end{align}
    Next, we simplify $$V_t - V_{t+1} -t(t+1) \times \LHSI~\eqref{eq:near-monotone-proof-1} - \frac{t(t+1)}{4p} \times \LHSI~\eqref{eq:near-monotone-proof-2} $$ using the second identity in Proposition~\ref{prop:identity}: replace $x_0$ with $z_0$; for $k \in [4]$, replace $x_k$ with $z_{t-1 +\frac{k}{2}}$ and replace $y_k$ with $\eta F(z_{t-1+\frac{k}{2}})$; replace $u_2$ with $\eta c_t$; replace $u_4$ with $\eta c_{t+1}$; replace $k$ with $t$; replace $p$ with $q$. Note that $x_3 = x_2 - y_1 - u_2 + \frac{1}{k+1}(x_0 -x_2)$ and $x_4 = x_2 - y_3 - u_4 + + \frac{1}{k+1}(x_0 -x_2)$ hold due to the above equivalent formations of $z_{t+\half}$ and $z_{t+1}$. Expression~\eqref{eq:sos-4} and \eqref{eq:sos-5} appear on both sides of the following equation.
    \begin{align}
        &V_t - V_{t+1} -t(t+1) \times \LHSI~ \eqref{eq:near-monotone-proof-1} - \frac{t(t+1)}{4p} \times \LHSI~\eqref{eq:near-monotone-proof-2} \notag \\
        &= \frac{t(t+1)}{4} \InNorms{\eta c_{t+1} - \eta c_t + \eta F(z_{t-\half}) - 2\eta F(z_t) + \eta F(z_{t+\half})}^2 \label{eq:sos-1} \\
        &\quad + \InParentheses{\frac{(1-4p)t - 4p}{4p}(t+1)} \cdot \InNorms{\eta F(z_{t+\half}) - \eta F(z_{t+1})}^2 \label{eq:sos-2}\\
        &\quad + (t+1) \cdot \InAngles{\eta F(z_{t+\half}) - \eta F(z_{t+1}), \eta F(z_{t+1}) + \eta c_{t+1} } \label{eq:sos-3} \\
        &\quad + t(t+1)\frac{\rho}{\eta} \cdot \InNorms{\eta F(z_{t+1}) + \eta c_{t+1} - \eta F(z_t) - \eta c_t}^2 \label{eq:sos-4} \\
        &\quad - \frac{t(t+1)}{4p} \cdot \InParentheses{\InParentheses{(1-\frac{\rho}{3\eta})\eta^2 L^2- p} \cdot \InNorms{z_{t+1} - z_{t+\half}}^2 + \frac{\rho}{3\eta} \InNorms{\eta F(z_{t+1}) - \eta F(z_{t+\half})}^2}. \label{eq:sos-5}
    \end{align}
    Since $\InNorms{a}^2 + \InAngles{a,b} = \InNorms{a+\frac{b}{2}}^2 - \frac{\InNorms{b}^2}{4}$, we have 
    \begin{align*}
        &\text{Expression~\eqref{eq:sos-2}}+\text{Expression~\eqref{eq:sos-3}}\\
        & = \InNorms{ \sqrt{ \frac{(1-4p)t - 4p}{4p}(t+1)}\cdot \InParentheses{\eta F(z_{t+\half}) - \eta F(z_{t+1})} + \sqrt{\frac{p (t+1)}{(1-4p)t - 4p}} \cdot \InParentheses{\eta F(z_{t+1}) + \eta c_{t+1}}  }^2 \\
        & \quad - \frac{p (t+1)}{(1-4p)t - 4p} \cdot \InNorms{\eta F(z_{t+1}) + \eta c_{t+1}}^2 \\
        & \ge - \frac{p(t+1)}{(1-8p)t} \cdot \InNorms{\eta F(z_{t+1}) + \eta c_{t+1}}^2 \tag{$t \ge 1$} \\
        &\ge - \frac{2p}{1-8p} \cdot \InNorms{\eta F(z_{t+1}) + \eta c_{t+1}}^2  \tag{$\frac{t+1}{t}\le 2 $}\\
        &= -\frac{1}{8} \InNorms{\eta F(z_{t+1}) + \eta c_{t+1}}^2. \tag{$p = \frac{1}{24}$}
    \end{align*}
    Now it remains to show that the sum of Expression \eqref{eq:sos-1}, \eqref{eq:sos-4}, and \eqref{eq:sos-5} is non-negative. Multiplying $\frac{4}{t(t+1)}$ and replacing $p = \frac{1}{24}$, we get
    \begin{align*}
        & \frac{4}{t(t+1)} \cdot \InParentheses{\text{Expression~\eqref{eq:sos-1}}+\text{Expression~\eqref{eq:sos-4}} +\text{Expression~\eqref{eq:sos-5}} } \\
        & = \InNorms{\eta c_{t+1} - \eta c_t + \eta F(z_{t-\half}) - 2\eta F(z_t) + \eta F(z_{t+\half})}^2 + \InParentheses{1 - (24-\frac{8\rho}{\eta})\eta^2 L^2} \cdot \InNorms{z_{t+1} - z_{t+\half}}^2 \\
        &\quad + \frac{4\rho}{\eta} \cdot \InNorms{\eta F(z_{t+1}) + \eta c_{t+1} - \eta F(z_t) - \eta c_t}^2 - \frac{8\rho}{\eta} \InNorms{\eta F(z_{t+1}) - \eta F(z_{t+\half})}^2.
    \end{align*}
    Denote 
    \begin{align*}
        B_1 &=\eta c_{t+1} - \eta c_t + \eta F(z_{t-\half}) - 2\eta F(z_t) + \eta F(z_{t+\half})  \\
        B_2 &= z_{t+1} - z_{t+\half} = \eta F(z_{t-\half}) + \eta c_t - \eta F(z_{t+\half}) - \eta c_{t+1} \tag{By \eqref{eq:z_t+1 - z_t+1/2}}\\
        B_3 &= \eta F(z_{t+1}) + \eta c_{t+1} - \eta F(z_t) - \eta c_t\\
        B_4 &= \eta F(z_{t+1}) - \eta F(z_{t+\half}).
    \end{align*}
    It is not hard to check that $B_1 - B_2 = 2(B_3- B_4)$:
    \begin{align*}
        B_1 - B_2 = 2\eta c_{t+1} - 2\eta c_t - 2\eta F(z_t) + 2\eta F(z_{t+\half}) = 2(B_3 - B_4).
    \end{align*}
   Note that $\rho$ is non-positive and we have 
     \begin{align*}
        & \frac{4}{t(t+1)} \cdot \InParentheses{\text{Expression~\eqref{eq:sos-1}}+\text{Expression~\eqref{eq:sos-4}} +\text{Expression~\eqref{eq:sos-5}} } \\
        & = \InNorms{B_1}^2 + \InParentheses{1 - (24-\frac{8\rho}{\eta})\eta^2 L^2} \cdot \InNorms{B_2}^2  + \frac{\rho}{\eta} \cdot \InNorms{2B_3}^2 - \frac{2\rho}{\eta} \InNorms{2B_4}^2 \\
        & \ge  \InParentheses{\frac{1}{2} - (12-\frac{4\rho}{\eta})\eta^2 L^2} \cdot \InNorms{B_1 - B_2}^2 + \frac{\rho}{\eta} \cdot \InNorms{2B_3}^2 - \frac{2\rho}{\eta} \InNorms{2B_4}^2\tag{$\InNorms{a}^2 + \InNorms{b}^2 \ge \frac{1}{2}\InNorms{a-b}^2$~\text{and}~$(24-{8\rho\over \eta})\eta^2L^2\geq 0$} \\
        & \ge \InParentheses{\frac{1}{2} - (12-\frac{4\rho}{\eta})\eta^2 L^2} \cdot \InNorms{B_1 - B_2}^2 + \frac{2\rho}{\eta} \cdot \InNorms{2B_3-2B_4}^2 \tag{$-\InNorms{a}^2 + 2\InNorms{b}^2 \ge -2\InNorms{a-b}^2$~\text{and}~$-{\rho\over \eta}\geq 0$} \\
        & = \InParentheses{\frac{1}{2} - (12-\frac{4\rho}{\eta})\eta^2 L^2 + \frac{2\rho}{\eta}} \cdot \InNorms{B_1 - B_2}^2 \tag{$B_1 - B_2 = 2(B_3 - B_4)$} \\
        & \ge 0 \tag{Inequality~\eqref{eq:eta}}.
    \end{align*}
    The last inequality holds by the choice of $\eta$ as shown in Fact~\ref{fact:eta}.
\end{proof}

\subsection{Bouding Potential at Iteration $1$}
\begin{lemma}
\label{lem:bound-V1}
Let $F$ be a $L$-Lipschitz operator and $A$ be a maximally monotone operator. For any $z_0 = z_{\half} \in \R^n$, $\eta \in (0,\frac{1}{2L})$, and $z_1 = J_{\eta A}\InBrackets{z_0 - \eta F(z_0)}$, we have the following
\begin{enumerate}
    \item $\InNorms{z_1 - z_0} \le \eta \cdot r^{tan}_{F,A}(z_0)$. 
    \item $\InNorms{\eta F(z_1) + \eta c_1} \le (1+\eta L) \InNorms{z_1 - z_0}$.
    \item $V_1 \le 4\InNorms{z_1 - z_0}^2$ where $V_1$ is defined in \eqref{eq:potential-ARG}.
\end{enumerate}
\end{lemma}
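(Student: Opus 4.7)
The plan is to prove the three parts in order, using each to set up the next. For Part 1, I would let $c^* \in A(z_0)$ achieve the minimum in $r^{tan}_{F,A}(z_0) = \min_{c \in A(z_0)} \InNorms{F(z_0)+c}$. By the second item of Proposition~\ref{prop:JA}, since $c^* \in A(z_0)$ we have $z_0 = J_{\eta A}(z_0 + \eta c^*)$. Combined with the definition $z_1 = J_{\eta A}(z_0 - \eta F(z_0))$ and the fact that $J_{\eta A}$ is non-expansive (Proposition~\ref{prop:JA}, item 1), this gives
\[
\InNorms{z_1 - z_0} = \InNorms{J_{\eta A}(z_0 - \eta F(z_0)) - J_{\eta A}(z_0 + \eta c^*)} \le \eta \InNorms{F(z_0) + c^*} = \eta \cdot r^{tan}_{F,A}(z_0).
\]

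For Part 2, I would use the explicit formula for $c_1$. By the definition of $c_{t+1}$ in~\eqref{eq:c-ARG} with $t=0$ (and using $z_{1/2} = z_0$), we get $\eta c_1 = z_0 - \eta F(z_0) - z_1$, so
\[
\eta F(z_1) + \eta c_1 = (\eta F(z_1) - \eta F(z_0)) + (z_0 - z_1).
\]
The triangle inequality together with the $L$-Lipschitzness of $F$ immediately yields $\InNorms{\eta F(z_1) + \eta c_1} \le (1+\eta L)\InNorms{z_1-z_0}$.

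For Part 3, I would unfold the definition of $V_1$ in~\eqref{eq:potential-ARG} with $t=1$ and $z_{1/2} = z_0$:
\[
V_1 = \InNorms{\eta F(z_1) + \eta c_1}^2 + \InNorms{\eta F(z_1) - \eta F(z_0)}^2 + \InAngles{\eta F(z_1) + \eta c_1, z_1 - z_0}.
\]
The first term is bounded by $(1+\eta L)^2 \InNorms{z_1-z_0}^2$ by Part 2, the second by $\eta^2 L^2 \InNorms{z_1 -z_0}^2$ by Lipschitzness, and the third by $(1+\eta L)\InNorms{z_1-z_0}^2$ by Cauchy--Schwarz combined with Part 2. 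Summing and plugging in $\eta L \le 1/2$ yields $(1+\eta L)^2 + \eta^2 L^2 + (1+\eta L) \le 9/4 + 1/4 + 3/2 = 4$, which gives $V_1 \le 4 \InNorms{z_1-z_0}^2$.

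None of these steps is a real obstacle; the only slightly subtle point is remembering the convention $z_{1/2} = z_0$ when evaluating the definition of $c_1$ and of $V_1$, since otherwise the expressions involve the undefined quantity $z_{1/2}$. The whole lemma is essentially a bookkeeping exercise to establish a clean base case for the potential-function telescoping argument powered by Lemma~\ref{lem:near-monotone potential}.
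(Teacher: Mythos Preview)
Your proposal is correct and follows essentially the same route as the paper's own proof: non-expansiveness of $J_{\eta A}$ with $z_0 = J_{\eta A}(z_0+\eta c)$ for Part~1, the identity $\eta c_1 = z_0 - \eta F(z_0) - z_1$ plus triangle inequality and Lipschitzness for Part~2, and termwise bounding of $V_1$ via Part~2, Lipschitzness, and Cauchy--Schwarz for Part~3, with the final numeric check $(1+\eta L)^2 + \eta^2 L^2 + (1+\eta L) \le 4$ under $\eta L < \tfrac{1}{2}$. The only cosmetic difference is that the paper phrases Part~1 as ``for any $c\in A(z_0)$'' and then minimizes, rather than picking a minimizer $c^*$ upfront; this sidesteps having to argue that the minimum is attained, but the content is identical.
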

\begin{proof}
    For any $c \in A(z_0)$, due to non-expansiveness of $J_{\eta A}$, we have 
    \begin{align*}
        \InNorms{z_1 - z_0} = \InNorms{J_{\eta A}\InBrackets{z_0 - \eta F(z_0)} - J_{\eta A}\InBrackets{z_0 + \eta c}} \le \eta \InNorms{F(z_0) + c}. 
    \end{align*}
    Thus $\InNorms{z_1 - z_0} \le \eta \cdot r^{tan}_{F,A}(z_0)$.

    By definition of $V_1$ in \eqref{eq:potential-ARG}, we have 
    \begin{align*}
        V_1 = \InNorms{\eta F(z_1) + \eta c_1}^2 + \InNorms{\eta F(z_1) - \eta F(z_0)}^2 + \InAngles{\eta F(z_1) + \eta c_1, z_1 - z_0}.
    \end{align*}
    We bound $\InNorms{\eta F(z_1) + \eta c_1}$ first. Note that by definition, we have $\eta c_1 = z_0 - \eta F(z_0) - z_1$. Thus we have
    \begin{align*}
        \InNorms{\eta F(z_1) + \eta c_1} &=  \InNorms{z_0 - z_1 + \eta F(z_1) - \eta F(z_0)}  \\
        & \le \InNorms{z_0 - z_1 } + \InNorms{ \eta F(z_1) - \eta F(z_0)} \tag{triangle inequality}\\
        & \le (1+\eta L) \InNorms{z_1- z_0} \tag{$F$ is $L$-Lipschitz}.
    \end{align*}
    Then we can apply the bound on $\InNorms{\eta F(z_1) + \eta c_1}$ to bound $V_1$ as follows:
    \begin{align*}
        V_1 &= \InNorms{\eta F(z_1) + \eta c_1}^2 + \InNorms{\eta F(z_1) - \eta F(z_0)}^2 + \InAngles{\eta F(z_1) + \eta c_1, z_1 - z_0}  \\
        & \le \InNorms{\eta F(z_1) + \eta c_1}^2 + \eta^2 L^2\InNorms{z_1 - z_0}^2 + \InNorms{\eta F(z_1) + \eta c_1}\InNorms{ z_1 - z_0}\\
        & \le (1+\eta L)^2 \InNorms{z_1 -z_0}^2 + \eta^2 L^2 \InNorms{z_1 - z_0}^2 + (1+\eta L) \InNorms{z_1 - z_0}^2\\
        & = (2+3\eta L + 2\eta^2 L^2) \InNorms{z_1 - z_0}^2 \\
        & \le 4 \InNorms{z_1 - z_0}^2.
    \end{align*}
    where we use $L$-Lipschitzness of $F$ and Cauchy-Schwarz inequality in the first inequality; we use $\InNorms{\eta F(z_1) + \eta c_1} \le (1+\eta L)\InNorms{z_1 - z_0}$ in the second inequality; we use $\eta L \le \frac{1}{2}$ in the last inequality.
\end{proof}

\subsection{Proof of Theorem~\ref{thm:last-ARG}}
We first show that the potential function $V_t = \Omega(t^2 \cdot r^{tan}(z_t)^2)$. 
\begin{lemma}
\label{lem:relate potential to residual}
In the same setup as Theorem~\ref{thm:last-ARG}, for any $t \ge 1$, we have 
\begin{align*}
    \frac{t(t+\half)}{4} \InNorms{\eta F(z_t) + \eta c_t}^2 \le V_t + \InNorms{z^* - z_0}^2.
\end{align*}
\end{lemma}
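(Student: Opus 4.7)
The plan is to lower bound $V_t$ by the quantity $\tfrac{t(t+\half)}{4}\InNorms{\eta F(z_t)+\eta c_t}^2$ minus a term that is absorbed into $\InNorms{z^*-z_0}^2$. The only term of $V_t$ that can be negative is the cross term $t\InAngles{\eta F(z_t)+\eta c_t,\,z_t-z_0}$, while the two squared-norm terms are both nonnegative and in particular the $L$-Lipschitz term $\tfrac{t(t+1)}{2}\InNorms{\eta F(z_t)-\eta F(z_{t-\half})}^2$ can simply be dropped.

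First I would split the cross term by writing $z_t-z_0=(z_t-z^*)+(z^*-z_0)$. For the first piece I use that $\boldsymbol{0}\in F(z^*)+A(z^*)$, so that by Assumption~\ref{assumption:comonotone} applied to $(z_t,F(z_t)+c_t)\in\Graph(E)$ and $(z^*,0)\in\Graph(E)$,
\begin{align*}
t\InAngles{\eta F(z_t)+\eta c_t,\,z_t-z^*} \;\ge\; \tfrac{t\rho}{\eta}\InNorms{\eta F(z_t)+\eta c_t}^2 \;\ge\; -\tfrac{t}{4}\InNorms{\eta F(z_t)+\eta c_t}^2,
\end{align*}
where the last step uses the second conclusion of Fact~\ref{fact:eta}, namely $\tfrac{\rho}{\eta}\ge-\tfrac14$, which is guaranteed by the choice of $\eta$ in Inequality~\eqref{eq:eta}.

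For the second piece I apply Cauchy--Schwarz followed by Young's inequality in the form $t\|u\|\|v\|\le \tfrac{t^2}{4}\|u\|^2+\|v\|^2$, giving
\begin{align*}
t\InAngles{\eta F(z_t)+\eta c_t,\,z^*-z_0} \;\ge\; -\tfrac{t^2}{4}\InNorms{\eta F(z_t)+\eta c_t}^2 \;-\; \InNorms{z^*-z_0}^2.
\end{align*}

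Putting these two bounds into the definition of $V_t$ and discarding the nonnegative Lipschitz term yields
\begin{align*}
V_t \;\ge\; \InParentheses{\tfrac{t(t+1)}{2}-\tfrac{t}{4}-\tfrac{t^2}{4}}\InNorms{\eta F(z_t)+\eta c_t}^2 - \InNorms{z^*-z_0}^2 = \tfrac{t(t+1)}{4}\InNorms{\eta F(z_t)+\eta c_t}^2 - \InNorms{z^*-z_0}^2,
\end{align*}
which is even slightly stronger than the claimed coefficient $\tfrac{t(t+\half)}{4}$, completing the argument. The only subtle step is the choice of the Young's inequality constant so as to leave a coefficient of $1$ in front of $\InNorms{z^*-z_0}^2$; the rest is book-keeping. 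I do not foresee a real obstacle here, beyond verifying that the comonotonicity bound is applicable at $(z^*,0)\in\Graph(E)$, which is immediate from Assumption~\ref{assumption:basic}.
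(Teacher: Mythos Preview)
Your proposal is correct and follows essentially the same approach as the paper: drop the nonnegative Lipschitz term, split $z_t-z_0=(z_t-z^*)+(z^*-z_0)$, control the first piece via $\rho$-comonotonicity together with $\rho/\eta\ge -\tfrac14$ from Fact~\ref{fact:eta}, and control the second piece via Young's inequality. The only difference is the Young parameter: the paper uses $\alpha=t+\tfrac12$ to land exactly on the coefficient $\tfrac{t(t+\half)}{4}$, whereas your choice $\alpha=t$ gives the slightly sharper $\tfrac{t(t+1)}{4}$, which of course also suffices.
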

\begin{proof}
    Since $0 \in F(z^*) + A(z^*)$, by $\rho$-comonotonicity of $F+A$ and Fact~\ref{fact:eta}, we have 
    \begin{align}
    \label{eq:comonotone}
        \InAngles{\eta F(z_t) +  \eta c_t, z_t - z^*} &\ge \frac{\rho}{\eta} \InNorms{\eta F(z_t) +  \eta c_t}^2 \ge -\frac{1}{4} \InNorms{\eta F(z_t) +  \eta c_t}^2.
    \end{align}
    By definition of $V_t$ in \eqref{eq:potential-ARG}, for any $t \ge 1$, we have
    \begin{align*}
        V_t &= \frac{t(t+1)}{2} \InNorms{\eta F(z_t) + \eta c_t}^2 + \frac{t(t+1)}{2} \InNorms{\eta F(z_t) - \eta F(z_{t-\half})}^2 + t \InAngles{\eta F(z_t) + \eta c_t, z_t - z_0} \\
        & \ge \frac{t(t+1)}{2} \InNorms{\eta F(z_t) + \eta c_t}^2  + t \InAngles{\eta F(z_t) + \eta c_t, z_t - z^*} +t \InAngles{\eta F(z_t) + \eta c_t, z^* - z_0} \\
        & \ge \frac{t(t+1)}{2} \InNorms{\eta F(z_t) + \eta c_t}^2  -\frac{1}{4}\InNorms{\eta F(z_t) + \eta c_t}^2 +t \InAngles{\eta F(z_t) + \eta c_t, z^* - z_0} \tag{By Inequality~ \eqref{eq:comonotone}}\\
        & \ge \frac{t(t+\frac{1}{2})}{2} \InNorms{\eta F(z_t) + \eta c_t}^2  - \frac{t(t+\half)}{4} \InNorms{\eta F(z_t) + \eta c_t}^2   - \frac{t}{t+\half} \InNorms{z^* - z_0}^2 \\
        & \ge \frac{t(t+\half)}{4} \InNorms{\eta F(z_t) + \eta c_t}^2   - \InNorms{z^* - z_0}^2 \tag{$\frac{t}{t+\half} < 1$ }
    \end{align*}
    where in the second last inequality we 
    we apply $\InAngles{a,b} \ge -\frac{\alpha}{4} \InNorms{a}^2 - \frac{1}{\alpha} \InNorms{b}^2$ with $a = \sqrt{t}(\eta F(z_t) + \eta c_t)$, $b = \sqrt{t} (z^* - z_0)$, and $\alpha = t+\frac{1}{2}$.
\end{proof}

\begin{proof}[Proof of Theorem~\ref{thm:last-ARG}]
    It is equivalent to prove that for every $T \ge 1$, we have
    \begin{align*}
        \InNorms{\eta F(z_T) + \eta c_T}^2 \le \frac{6H^2}{T^2}.
    \end{align*}
    From Lemma~\ref{lem:bound-V1}, we have 
    \begin{align*}
        \InNorms{\eta F(z_1) + \eta c_1}^2 \le (1+\eta L)^2 \InNorms{z_1 - z_0}^2 \le H^2.
    \end{align*}
    So the theorem holds for $T=1$. 
    
    For any $T \ge 2$, by Lemma~\ref{lem:relate potential to residual} we have
    \begin{align*}
        \frac{T(T+\half)}{4} \InNorms{\eta F(z_T) + \eta c_T}^2 &\le V_T + \InNorms{z_0 - z^*}^2 \\
        &\le V_1 + \InNorms{z_0 - z^*}^2 +\frac{1}{8} \sum_{t=2}^{T} \InNorms{\eta F(z_t) + \eta c_t}^2 \\
        & = H^2  + \frac{1}{8} \sum_{t=2}^{T} \InNorms{\eta F(z_t) + \eta c_t}^2.
    \end{align*}
    By subtracting $\frac{1}{8} \InNorms{\eta F(z_T) + \eta c_T}^2$ from both sides of the above inequality, we get 
    \begin{align*}
        \frac{T^2}{4} \InNorms{\eta F(z_T) + \eta c_T}^2 \le H^2 + \frac{1}{8} \sum_{t=2}^{T-1} \InNorms{\eta F(z_t) + \eta c_t}^2
    \end{align*}
    which is in the form of Proposition~\ref{prop:sequence analysis} with $C_1 = H^2$ and $p = \frac{1}{9}$. Thus we have for any $T \ge 2$
    \begin{align*}
        \InNorms{\eta F(z_T) + \eta c_T}^2 \le \frac{6H^2}{T^2}.
    \end{align*}
\end{proof}

\section{Missing Proofs in Section~\ref{sec:RG}}
\label{appendix:RG}
To prove Theorem~\ref{thm:last-RG}, our analysis is based on a potential function argument and can be summarized in the following three steps. (1) We construct a potential function and show that it is non-increasing between two consecutive iterates; (2) We prove that the \ref{eq:RG} algorithm has a best-iterate convergence rate, i.e., for any $T \ge 1$, there exists one iterate $t^* \in [T]$ such that our potential function at iterate $t^*$ is small; (3) We combine the above steps to show that the the last iterate has the same convergence guarantee as the best iterate and derive the  $O(\frac{1}{\sqrt{T}})$ last-iterate convergence rate. 

\subsection{Non-increasing Potential}
\paragraph{Potential Function.} We denote \begin{align}
\label{eq:c-RG}
    c_{t+1} := \frac{z_t - \eta F(z_{t+\half}) - z_{t+1}}{\eta},\: \forall t \ge 0
\end{align}
Note that according to the update rule of \ref{eq:RG},  $z_{t+1} = \Pi_\Z \InBrackets{z_t - \eta F(z_{t+\half})}$, so $c_{t+1} \in N_\Z(z_{t+1})$.

The potential function we adopt is $P_t$ defined as 
\begin{equation}
\label{eq:potential-RG}
    P_t := \InNorms{F(z_t) + c_t}^2 + \InNorms{F(z_t) -  F(z_{t-\half})}^2, \:\forall t \ge 1.
\end{equation}

\begin{lemma}
\label{lem:monotone potential}
In the same setup of Theorem~\ref{thm:last-RG},  $P_t \ge P_{t+1}$ for any $t \ge 1$.
\end{lemma}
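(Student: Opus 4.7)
The plan is to verify $P_{t+1}\le P_t$ by combining monotonicity of $F+N_\Z$ with Lipschitz continuity of $F$ in an SOS argument, after rewriting everything in terms of two key quantities. Introduce the shorthand $a_s := F(z_s)+c_s$ and $e_s := F(z_s)-F(z_{s-\half})$, so $P_s = \|a_s\|^2+\|e_s\|^2$. The \ref{eq:RG} update gives $z_{t+1}-z_t = -\eta(F(z_{t+\half})+c_{t+1}) = -\eta(a_{t+1}-e_{t+1})$, and the reflection $z_{t+\half}=2z_t-z_{t-1}$ plus the previous step give $z_{t+\half}-z_t = z_t-z_{t-1} = -\eta(a_t-e_t)$. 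Subtracting yields
$$z_{t+1}-z_{t+\half} = \eta\bigl((a_t-e_t)-(a_{t+1}-e_{t+1})\bigr) =: \eta v.$$

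Since $F$ is monotone and $c_t\in N_\Z(z_t)$, $c_{t+1}\in N_\Z(z_{t+1})$, monotonicity of $F+N_\Z$ gives $\langle a_{t+1}-a_t,z_{t+1}-z_t\rangle\ge 0$, which after substitution becomes $\langle a_{t+1}-a_t, a_{t+1}-e_{t+1}\rangle\le 0$. A standard manipulation (completing squares) converts this into
$$\|a_t\|^2-\|a_{t+1}\|^2 \ge \|a_t-a_{t+1}+e_{t+1}\|^2 - \|e_{t+1}\|^2,$$
so adding $\|e_t\|^2-\|e_{t+1}\|^2$ to both sides produces
$$P_t-P_{t+1} \ge \|a_t-a_{t+1}+e_{t+1}\|^2 + \|e_t\|^2 - 2\|e_{t+1}\|^2.$$
The pivotal algebraic identity is that the cross term collapses: using $F(z_{t-\half})+c_t = (z_{t-1}-z_t)/\eta$ and $F(z_{t+\half})+c_{t+1} = (z_t-z_{t+1})/\eta$ to eliminate $c_t,c_{t+1}$, one verifies directly that
$$a_t-a_{t+1}+e_{t+1} = F(z_t)-F(z_{t-\half}) + \frac{z_{t+1}-z_{t+\half}}{\eta} = e_t + v.$$

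The finish uses Lipschitzness: $\|e_{t+1}\|^2\le L^2\|z_{t+1}-z_{t+\half}\|^2 = \eta^2L^2\|v\|^2$. Substituting,
$$P_t-P_{t+1} \ge \|e_t+v\|^2 + \|e_t\|^2 - 2\eta^2L^2\|v\|^2 = 2\|e_t\|^2 + 2\langle e_t,v\rangle + (1-2\eta^2L^2)\|v\|^2,$$
which, as a quadratic form in $(e_t,v)$ with block matrix $\bigl(\begin{smallmatrix}2I & I\\ I & (1-2\eta^2L^2)I\end{smallmatrix}\bigr)$, is PSD iff $2(1-2\eta^2L^2)\ge 1$, i.e., $\eta L\le 1/2$. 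This is implied by the theorem's step size $\eta<1/((1+\sqrt{2})L)$ since $1/(1+\sqrt 2)<1/2$. The main obstacle is spotting the collapse $a_t-a_{t+1}+e_{t+1}=e_t+v$; without it the SOS has a $\|e_{t+1}\|^2$ that cannot be absorbed. Once that identity is in place, monotonicity gives the $\|a_t-a_{t+1}+e_{t+1}\|^2$ slack and Lipschitzness caps $\|e_{t+1}\|^2$ by $\eta^2L^2\|v\|^2$, yielding a self-contained certificate. The stricter bound $\eta L<1/(1+\sqrt 2)$ will only be needed in the subsequent best-iterate and last-iterate arguments.
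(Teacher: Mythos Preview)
Your argument is correct. The key identity $a_t-a_{t+1}+e_{t+1}=e_t+v$ checks out, the monotonicity step is equivalent to the claimed square-completion, and the resulting $2\times 2$ PSD condition $2(1-2\eta^2L^2)\ge 1$ is exactly $\eta L\le \tfrac12$, which the theorem's step size $\eta<\tfrac{1}{(1+\sqrt 2)L}$ implies.

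Your route and the paper's are conceptually the same---both combine monotonicity of $F$ and of $N_\Z$ with the Lipschitz bound on $F$ into a sum-of-squares certificate---but the execution differs. The paper treats $F$-monotonicity, $N_\Z$-monotonicity, and the Lipschitz inequality as three separate non-positive slack terms, adds them to $\eta^2(P_t-P_{t+1})$, and then invokes a polynomial identity (verified symbolically in \textsc{Matlab}, Proposition~\ref{prop:identity}) to show the result equals the explicit sum of two squares $\bigl\|\tfrac{z_{t+\half}-z_{t+1}}{2}+\eta F(z_{t-\half})-\eta F(z_t)\bigr\|^2+\bigl\|\tfrac{z_{t+\half}+z_{t+1}}{2}-z_t+\eta F(z_t)+\eta c_t\bigr\|^2$. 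You instead merge the two monotonicity terms into a single use of monotonicity of $F+N_\Z$, pass to the reduced variables $a_s,e_s,v$, and discover the collapse $a_t-a_{t+1}+e_{t+1}=e_t+v$ by hand, which turns the lower bound into a transparent quadratic form in $(e_t,v)$. Your version is more self-contained (no computer-verified identity needed) and makes the role of the step-size threshold $\eta L\le\tfrac12$ explicit, whereas the paper's version gives a concrete SOS decomposition that could in principle be reused for sharper quantitative statements. In fact, in your variables the paper's two squares both reduce to $\eta^2\|e_t+\tfrac{v}{2}\|^2$, so the two certificates are closely related.
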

\begin{proof}
    The plan is to show that $P_t - P_{t+1}$ plus a few non-positive terms is non-negative, which certifies that $P_t - P_{t+1} \ge 0$. 
    
    \paragraph{Three Non-Positive Terms.} Since $F$ is monotone, we have
    \begin{equation}
    \label{eq:monotone-proof-1}
        (-2 )\cdot \InAngles{\eta F(z_{t+1}) - \eta F(z_t), z_{t+1} - z_t} \le 0.
    \end{equation}
    Since $F$ is $L$-Lipschitz and $ 0 < \eta < \frac{1}{(1+\sqrt{2})L} < \frac{1}{2L}$, we have
    \begin{equation}
    \label{eq:monotone-proof-2}
        (-2 )\cdot \InParentheses{\frac{1}{4} \cdot \InNorms{z_{t+1} - z_{t+\half}}^2 - \InNorms{\eta F(z_{t+1}) - \eta F(z_{t+\half})}^2} \le 0.
    \end{equation}
    By definition, we have $c_{t+1} \in N_\Z(z_{t+1})$ and $c_t \in N_\Z(z_t)$. Since the normal cone operator $N_\Z$ is maximally monotone, we have 
    \begin{equation}
    \label{eq:monotone-proof-3}
        (-2 )\cdot \InAngles{\eta c_{t+1} - \eta c_t, z_{t+1} - z_t} \le 0.
    \end{equation}
    
    \paragraph{Sum-of-Squares Identity.}
    We use the following equivalent formations of $z_{t+\half}$ and $z_{t+1}$.
    \begin{align*}
        &z_{t+\half} = 2z_t - z_{t-1} = z_t - (z_{t-1} - z_t) = z_t - \eta F(z_{t-\half}) - \eta c_t, \\
        &z_{t+1} = \Pi_\Z\InBrackets{z_t - \eta F(z_{t+\half})} = z_t - \eta F(z_{t+\half}) - \eta c_{t+1}.
    \end{align*}
      The following identity holds according to  Proposition~\ref{prop:identity}. To see this, we replace $x_k$ with $z_{t-1 +\frac{k}{2}}$; replace $y_k$ with $\eta F(z_{t-1+\frac{k}{2}})$; replace $u_2$ with $\eta c_t$; replace $u_4$ with $\eta c_{t+1}$; also note that $x_3 = x_2 - y_1 - u_2$ and $x_4 = x_2 - y_3 - u_4$ hold due to the above equivalent formations of $z_{t+\half}$ and $z_{t+1}$.
    \begin{align*}
        &\eta^2 \cdot (P_t - P_{t+1}) + \LHSI \eqref{eq:monotone-proof-1} + \LHSI \eqref{eq:monotone-proof-2} + \LHSI \eqref{eq:monotone-proof-3}  \\
        &= \InNorms{\frac{z_{t+\half} - z_{t+1}}{2} + \eta F(z_{t-\half}) - \eta F(z_t)}^2 + \InNorms{\frac{z_{t+\half} + z_{t+1}}{2} - z_t + \eta F(z_t) + \eta c_t}^2.
    \end{align*}
    The right-hand side of the above equality is clearly $\ge 0$, thus we conclude $P_t - P_{t+1} \ge 0$.
\end{proof}

\subsection{Best-Iterate Convergence}
In this section, we show that for any $T \ge 1$, there exists some iterate $t^*$ such that $P_{t^*} = O(\frac{1}{T})$, which is implied by $\sum_{t=1}^T P_t = O(1)$. To prove this, we first show $\sum_{t=1}^T \InNorms{z_{t+\half}-z_t}^2 = \sum_{t=1}^T \InNorms{z_t - z_{t-1}}^2 = O(1)$ and then relate $\sum_{t=1}^T P_t$ to these two quantities. 

\begin{lemma}
\label{lem:best-iterate}
In the same setup of Theorem~\ref{thm:last-RG}, for any $T \ge 1$, we have 
\begin{equation*}
    \sum_{t=1}^T \InNorms{z_{t+\half} -z_t}^2 = \sum_{t=1}^T \InNorms{z_t -z_{t-1}}^2 \le \frac{H^2}{1- (1+\sqrt{2})\eta L}.
\end{equation*}
\end{lemma}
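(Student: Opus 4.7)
The first equality $\sum_{t=1}^T \InNorms{z_{t+\half} - z_t}^2 = \sum_{t=1}^T \InNorms{z_t - z_{t-1}}^2$ is immediate from the update rule, since $z_{t+\half} - z_t = (2z_t - z_{t-1}) - z_t = z_t - z_{t-1}$. The bulk of the lemma is the upper bound on $\sum_{t=1}^T \InNorms{z_t - z_{t-1}}^2$, which I will derive by a Lyapunov / potential function argument.

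I propose to define a potential of the form
\[
    \Phi_t := \InNorms{z_t - z^*}^2 + \alpha \InNorms{z_t - z_{t-1}}^2
\]
for a constant $\alpha > 0$ to be chosen, and to prove a descent inequality
\[
    \Phi_{t+1} \le \Phi_t - \big(1 - (1+\sqrt{2})\eta L\big)\InNorms{z_t - z_{t-1}}^2 \quad \text{for all } t \ge 1.
\]
Telescoping then yields $\big(1-(1+\sqrt{2})\eta L\big) \sum_{t=1}^T \InNorms{z_t - z_{t-1}}^2 \le \Phi_1$, and it will remain to verify $\Phi_1 \le H^2$. To obtain the descent inequality, I will start from the projection characterization of $z_{t+1} = \Pi_\Z[z_t - \eta F(z_{t+\half})]$ tested at $z^*$, combined with the three-point identity $2\InAngles{a-b,\,c-b} = \InNorms{a-b}^2 + \InNorms{c-b}^2 - \InNorms{a-c}^2$:
\[
    \InNorms{z_{t+1} - z^*}^2 + \InNorms{z_{t+1} - z_t}^2 \le \InNorms{z_t - z^*}^2 + 2\eta \InAngles{F(z_{t+\half}),\, z^* - z_{t+1}}.
\]
I then plan to control the cross term on the right by: (i) splitting off $F(z_t)$ and using monotonicity together with the variational inequality condition $\InAngles{F(z_t), z_t - z^*} \ge 0$; (ii) the Lipschitz bound $\InNorms{F(z_{t+\half}) - F(z_t)} \le L\InNorms{z_{t+\half} - z_t} = L\InNorms{z_t - z_{t-1}}$; and (iii) the projection inequality at the \emph{previous} step tested at $z_{t+1}$, which produces the $\InNorms{z_{t+1} - z_t}^2$ and $\InNorms{z_t - z_{t-1}}^2$ terms needed to close the potential. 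The factor $(1+\sqrt{2})$ will emerge from optimizing the Young's inequality constants used to split these Lipschitz/cross terms.

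Once the descent inequality is in hand, the initialization bound $\Phi_1 \le H^2$ follows from $z_{-1} = z_0$ and $z_1 = \Pi_\Z[z_0 - \eta F(z_0)]$: non-expansiveness of the projection yields $\InNorms{z_1 - z_0} \le \eta\InNorms{F(z_0)}$, and $\InNorms{z_1 - z^*}^2 \le 2\InNorms{z_0 - z^*}^2 + 2\eta^2\InNorms{F(z_0)}^2$, so $\Phi_1 \le 2\InNorms{z_0 - z^*}^2 + (2 + \alpha)\eta^2\InNorms{F(z_0)}^2$; with $\alpha$ of order one and $\eta < \frac{1}{(1+\sqrt{2})L}$, this will fit inside $H^2 = 4\InNorms{z_0 - z^*}^2 + \frac{13}{L^2}\InNorms{F(z_0)}^2$. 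The main obstacle I anticipate is the bookkeeping in the descent inequality: the projection and monotonicity inequalities produce several cross terms that must all be absorbed into multiples of $\InNorms{z_t - z_{t-1}}^2$ and $\InNorms{z_{t+1} - z_t}^2$, and the appearance of $\sqrt{2}$ in the step-size threshold signals that the Young's constants and the value of $\alpha$ must be chosen simultaneously and tightly, with little slack.
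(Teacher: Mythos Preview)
Your overall strategy---a Lyapunov argument with a potential containing $\InNorms{z_t - z^*}^2$ plus a quadratic term in consecutive iterates, telescoped and closed with an initialization bound---is sound and is what the paper does. The paper, however, invokes the key per-iteration inequality directly from \citep[Lemma~2]{hsieh_convergence_2019}; the telescoping term that naturally appears there is $\eta L\InNorms{z_t - z_{t-\half}}^2$ (with $z_{t-\half} = 2z_{t-1}-z_{t-2}$), not $\alpha\InNorms{z_t - z_{t-1}}^2$.

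The real difficulty is in your step (i) for the cross term. If you split $F(z_{t+\half}) = F(z_t) + \bigl(F(z_{t+\half}) - F(z_t)\bigr)$ inside $\InAngles{F(z_{t+\half}),\, z^* - z_{t+1}}$ and bound the difference piece by Lipschitz $+$ Young's, you are left with a contribution proportional to $\InNorms{z^* - z_{t+1}}^2$ (or $\InNorms{z^* - z_t}^2$). This is of the same order as the potential itself, not a small residual, so it cannot be absorbed by any choice of $\alpha$ and Young's constants; the previous-step projection inequality (your step (iii)) does not touch this term either. This is not bookkeeping---it is a structural obstruction to the particular decomposition you chose.

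The paper avoids this by shifting the second argument to $z_{t+\half}$ first and applying monotonicity at the pair $(z_{t+\half},z^*)$:
\[
   -\InAngles{F(z_{t+\half}),\, z_{t+\half} - z^*} \;\le\; -\InAngles{F(z^*),\, z_{t+\half} - z^*}.
\]
Since $z_{t+\half}\notin\Z$ in general, one cannot drop the right-hand side via the VI condition; instead the reflection identity $z_{t+\half} = 2z_t - z_{t-1}$ gives $-\InAngles{F(z^*), z_{t+\half}-z^*} = \InAngles{F(z^*), z_{t-1}-z^*} - 2\InAngles{F(z^*), z_t-z^*}$, and after summing over $t$ the total is at most $2\eta\InAngles{F(z^*),\, z_0 - z^*}$ because each $\InAngles{F(z^*), z_t - z^*}\ge 0$ for $z_t\in\Z$. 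So the cross term telescopes across the sum rather than vanishing per iteration; if you want a genuine monotone potential you would need to include a term like $2\eta\InAngles{F(z^*), z_{t-1}-z^*}$ in $\Phi_t$.
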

\begin{proof}
    First note that by the update rule of \ref{eq:RG}, we have $z_{t+\half} = 2z_t - z_{t-1}$ thus $z_{t+\half} - z_t = z_t - z_{t-1}$. Therefore, it suffices to only prove the inequality for $\sum_{t=1}^T\InNorms{z_{t+\half} - z_t}^2$.
    
    From the proof of \cite[Lemma 2]{hsieh_convergence_2019}, for any $t \ge 1$ and $p \in \Z$, we have 
    \begin{align}
        \InParentheses{1 - (1+\sqrt{2})\eta L } \cdot \InNorms{z_{t+\half}-z_t}^2 &\le \InNorms{z_t - p}^2  - \InNorms{z_{t+1} - p}^2 - 2\eta \InAngles{F(z_{t+\half}), z_{t+\half} - p} \notag\\
        &\quad + \eta L \InParentheses{\InNorms{z_t - z_{t-\half}}^2 - \InNorms{z_{t+1} - z_{t+\half}}^2}.\label{eq:best-iterate}
    \end{align}
    We set $p = z^*$ to be a solution of the variational inequality \eqref{VI} problem in the above inequality. Note that 
    \begin{align}
        -2\eta\InAngles{F(z_{t+\half}), z_{t+\half} - z^*} &= -2\eta\InAngles{F(z_{t+\half}) - F(z^*), z_{t+\half} - z^*} - 2\eta \InAngles{F(z^*), z_{t+\half} - z^*} \notag \\
        &\le -2\eta \InAngles{F(z^*), z_{t+\half} - z^*} \tag{$F$ is monotone}\\
        & = 2\eta\InAngles{F(z^*), z_{t-1} -  z^*} - 4\eta\InAngles{F(z^*), z_t - z^*} \label{eq:best-iterate-2}
    \end{align}
    where the last equality holds since $z_{t+\half} = 2z_t - z_{t-1}$. Also note that $\InAngles{F(z^*), z_t - z^*} \ge 0$ for all $t \ge 0$ since $z_t \in \Z$ and $z^*$ is a solution to \eqref{VI}.
    Combing Inequality~\eqref{eq:best-iterate} and Inequality~\eqref{eq:best-iterate-2}, telescoping the terms for $t = 1, 2, \cdots, T$, and dividing both sides by $1 - (1+\sqrt{2})\eta L > 0$, we get 
    \begin{align*}
        \sum_{t=1}^T \InNorms{z_{t+\half} -z_t}^2 \le  \frac{\InNorms{z_1 -z^*}^2 + \InNorms{z_1 - z_{\half}}^2 + 2\eta\InAngles{F(z^*),z_0-z^*} }{1- (1+\sqrt{2})\eta L}. 
    \end{align*}
    To get a cleaner constant that only relies on the starting point $z_0 = z_{\half}$, we further simplify the three terms on the right-hand side. Note that since $\eta < \frac{1}{2L}$ and $z_1 = \Pi_\Z\InBrackets{z_0 - \eta F(z_0)}$, we have
    \begin{align*}
        \InNorms{z_1 - z_{\half}}^2 = \InNorms{z_1 - z_0}^2 \le \eta^2\InNorms{F(z_0)}^2 \le \frac{4}{L^2} \InNorms{F(z_0)}^2. 
    \end{align*}
    Thus we have 
    \begin{align*}
        \InNorms{z_1 - z^*}^2 \le 2\InNorms{z_1 - z_0}^2 + 2\InNorms{z_0 - z^*}^2 \le \frac{8}{L^2}\InNorms{F(z_0)}^2 + 2\InNorms{z_0 - z^*}^2.
    \end{align*} 
    Moreover, 
    \begin{align*}
        2\eta \InAngles{F(z^*), z_0 - z^*} &\le 2\eta \InNorms{F(z^*)} \InNorms{z_0 - z^*} \\
        & \le 2\eta \InParentheses{\InNorms{F(z^*) - F(z_0)} + \InNorms{F(z_0)}} \InNorms{z_0 - z^*}\tag{$\InNorms{A} \le \InNorms{A-B} + \InNorms{B}$} \\
        & \le 2\eta L \InNorms{z_0 - z^*}^2 + 2\eta \InNorms{F(z_0)} \InNorms{z_0 - z^*}  \\
        & \le \InNorms{z_0 - z^*}^2 + \frac{1}{L} \InNorms{F(z_0)}\InNorms{z_0 - z^*} \tag{$\eta < \frac{1}{2L}$}\\
        & \le 2\InNorms{z_0 - z^*}^2 + \frac{1}{L^2} \InNorms{F(z_0)}^2 \tag{$2ab \le a^2 + b^2$}.
    \end{align*}
    Thus 
    \begin{align*}
        \InNorms{z_1 -z^*}^2 + \InNorms{z_1 - z_{\half}}^2 + 2\eta\InAngles{F(z^*),z_0-z^*} \le \frac{13}{L^2} \InNorms{F(z_0)}^2 + 4 \InNorms{z_0 - z^*}^2 = H^2.
    \end{align*}
    This completes the proof.
\end{proof}

\begin{lemma}
\label{lem:best-potential}
In the same setup of Theorem~\ref{thm:last-RG}, for any $T \ge 1$, we have
\begin{align*}
    \sum_{t=1}^T P_t \le \lambda^2 H^2 L^2.
\end{align*}
\end{lemma}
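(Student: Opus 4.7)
The plan is to reduce the sum $\sum_{t=1}^T P_t$ to $\sum_{t=1}^T \InNorms{z_t - z_{t-1}}^2$, which Lemma~\ref{lem:best-iterate} already controls by $\frac{H^2}{1-(1+\sqrt{2})\eta L}$. No new analytic machinery is needed; the argument is a short triangle-inequality reduction.

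\textbf{Step 1 (express $P_t$ via iterate increments).} By the definition of $c_t$ in \eqref{eq:c-RG} (with the index shifted down by one), $\eta c_t = z_{t-1} - \eta F(z_{t-\half}) - z_t$, hence
\begin{equation*}
    \eta F(z_t) + \eta c_t = (z_{t-1} - z_t) + \eta\bigl(F(z_t) - F(z_{t-\half})\bigr).
\end{equation*}
Applying $\InNorms{a+b}^2 \le 2\InNorms{a}^2 + 2\InNorms{b}^2$ together with the $L$-Lipschitzness of $F$ gives
\begin{equation*}
    \InNorms{\eta F(z_t) + \eta c_t}^2 \le 2\InNorms{z_t - z_{t-1}}^2 + 2\eta^2 L^2 \InNorms{z_t - z_{t-\half}}^2,
\end{equation*}
while $\InNorms{\eta F(z_t) - \eta F(z_{t-\half})}^2 \le \eta^2 L^2 \InNorms{z_t - z_{t-\half}}^2$. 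Adding the two bounds I obtain
\begin{equation*}
    \eta^2 P_t \;\le\; 2\InNorms{z_t - z_{t-1}}^2 + 3\eta^2 L^2 \InNorms{z_t - z_{t-\half}}^2.
\end{equation*}

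\textbf{Step 2 (control $\InNorms{z_t - z_{t-\half}}^2$).} For $t=1$, the initialization $z_{\half} = z_0$ gives $\InNorms{z_1 - z_{\half}}^2 = \InNorms{z_1 - z_0}^2$. For $t \ge 2$, applying the \ref{eq:RG} rule at step $t-1$ yields $z_{t-\half} = 2z_{t-1} - z_{t-2}$, so
\begin{equation*}
    z_t - z_{t-\half} \;=\; (z_t - z_{t-1}) - (z_{t-1} - z_{t-2}),
\end{equation*}
and therefore $\InNorms{z_t - z_{t-\half}}^2 \le 2\InNorms{z_t - z_{t-1}}^2 + 2\InNorms{z_{t-1} - z_{t-2}}^2$. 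Summing over $t$ and reindexing the shifted term yields $\sum_{t=1}^T \InNorms{z_t - z_{t-\half}}^2 \le 5 S$, where $S := \sum_{t=1}^T \InNorms{z_t - z_{t-1}}^2$.

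\textbf{Step 3 (assemble).} Combining Steps~1 and~2,
\begin{equation*}
    \eta^2 \sum_{t=1}^T P_t \;\le\; 2 S + 3\eta^2 L^2 \cdot 5 S \;=\; (2 + 15\eta^2 L^2)\, S \;\le\; 6(1+3\eta^2 L^2)\, S.
\end{equation*}
Lemma~\ref{lem:best-iterate} gives $S \le \frac{H^2}{1-(1+\sqrt{2})\eta L}$, and substituting the definition $\lambda^2 L^2 = \frac{6(1+3\eta^2 L^2)}{\eta^2(1-(1+\sqrt{2})\eta L)}$ concludes $\sum_{t=1}^T P_t \le \lambda^2 H^2 L^2$.

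I do not foresee a real obstacle: the uniform-in-$T$ bound on the squared iterate increments was the substantive step and is already supplied by Lemma~\ref{lem:best-iterate}. The only minor delicate point is handling the first iteration ($t=1$) separately, since there $z_{\half}=z_0$ holds by initialization rather than by the recursion $z_{t+\half} = 2z_t - z_{t-1}$.
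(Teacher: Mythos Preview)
Your proposal is correct and follows essentially the same approach as the paper: both expand $F(z_t)+c_t$ via the definition of $c_t$, bound $P_t$ by iterate increments using $\|a+b\|^2\le 2\|a\|^2+2\|b\|^2$ and Lipschitzness, then sum and invoke Lemma~\ref{lem:best-iterate}. The only cosmetic difference is that the paper splits $z_t-z_{t-\half}=(z_t-z_{t-1})+(z_{t-1}-z_{t-\half})$ and directly uses $\|z_{t-\half}-z_{t-1}\|=\|z_{t-1}-z_{t-2}\|$, whereas you write $z_t-z_{t-\half}=(z_t-z_{t-1})-(z_{t-1}-z_{t-2})$ and bound slightly more loosely (your $2+15\eta^2L^2$ versus the paper's $3(2+6\eta^2L^2)$), but both land on the same constant $6(1+3\eta^2L^2)$.
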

\begin{proof}
    We first show an upper bound for $P_t$
    \begin{align*}
        P_t &= \InNorms{F(z_t) + c_t}^2 + \InNorms{F(z_t) -  F(z_{t-\half})}^2  \\
        &= \InNorms{F(z_t) -F(z_{t-\half}) + \frac{z_t - z_{t-1}}{\eta}}^2 + \InNorms{F(z_t) -  F(z_{t-\half})}^2 \tag{definition of $c_t$~\eqref{eq:c-RG}} \\
         & \le 3 \InNorms{F(z_t) -F(z_{t-\half})}^2 + \frac{2}{\eta^2} \InNorms{z_t - z_{t-1}}^2 \tag{$\InNorms{A+B}^2 \le 2\InNorms{A}^2 + 2\InNorms{B}^2$} \\
        & \le 3 L^2 \InNorms{z_t - z_{t-\half}}^2 + \frac{2}{\eta^2} \InNorms{z_t - z_{t-1}}^2 \tag{$F$ is $L$-Lipschitz} \\
        & = 3 L^2 \InNorms{z_t - z_{t-1} + z_{t-1} - z_{t-\half}}^2 + \frac{2}{\eta^2} \InNorms{z_t - z_{t-1}}^2 \\
        & \le 6L^2 \InNorms{z_{t-\half} - z_{t-1}}^2 + \InParentheses{\frac{2}{\eta^2} + 6 L^2} \InNorms{z_t - z_{t-1}}^2 \tag{$\InNorms{A+B}^2 \le 2\InNorms{A}^2 + 2\InNorms{B}^2$}\\
        & \le \frac{2 + 6\eta^2 L^2}{\eta^2} \InParentheses{\InNorms{z_{t-\half} - z_{t-1}}^2 + \InNorms{z_t - z_{t-1}}^2 }.
    \end{align*} 
    Summing the above inequality of $t = 1, 2, \cdots T$, we get 
    \begin{align*}
        \sum_{t=1}^T P_t &\le \frac{2+6\eta^2 L^2}{\eta^2} \sum_{t=1}^T \InParentheses{\InNorms{z_{t-\half} - z_{t-1}}^2 + \InNorms{z_t - z_{t-1}}^2 } \\
        &= \frac{2+6\eta^2 L^2}{\eta^2} \InParentheses{\InNorms{z_1 - z_0}^2 +  \sum_{t=1}^{T-1} \InParentheses{\InNorms{z_{t+\half} - z_t}^2 + \InNorms{z_{t+1} - z_t}^2 } } \\
        &\le \frac{2+6\eta^2 L^2}{\eta^2} \InParentheses{\InNorms{z_1 - z_0}^2 +  \frac{2H^2}{1-(1+\sqrt{2})\eta L} } \\
        & \le \frac{6(1+3\eta^2 L^2)H^2}{\eta^2 \InParentheses{1 -(1+\sqrt{2})\eta L}}.
    \end{align*}
    The second last inequality holds by Lemma~\ref{lem:best-iterate}. The last inequality holds since $\InNorms{z_1 - z_0}^2 \le \frac{4}{L^2}\InNorms{F(z_0)}^2 \le H^2$. Recall that $\lambda = \sqrt{\frac{6(1+3\eta^2 L^2)}{\eta^2L^2 \InParentheses{1 -(1+\sqrt{2})\eta L}}}$. This completes the proof.
\end{proof}

\subsection{Proof of Theorem~\ref{thm:last-RG}}
Fix any $T \ge 1$. From Lemma~\ref{lem:monotone potential}, we know that the potential function $P_t$ is non-increasing for all $t\ge 1$. Lemma~\ref{lem:best-potential} guarantees that the sum of potential functions $\sum_{t=1}^T P_t$ is upper bounded by $\lambda^2 H^2 L^2$, where $\lambda^2 = \frac{6(1+3\eta^2 L^2)}{\eta^2 L^2\InParentheses{1-(1+\sqrt{2})\eta L}}$. Combining the above, we can conclude that the potential function at the last iterate $P_T$ is upper bounded by   $\frac{\lambda^2 H^2 L^2}{T}$. Since $P_T = \InNorms{F(z_T) + c_T}^2 + \InNorms{F(z_T) - F(z_{T-\half})}^2$, we obtain the last-iterate convergence rate 
$r^{tan}_{F,Z}(z_T)^2 \le \InNorms{F(z_T) + c_T}^2 \le \frac{\lambda^2 H^2 L^2}{T}$.  

The convergence rate on $\InNorms{F(z_T) + c_T}^2$ implies a convergence rate on the gap function $\gap_{Z,F,D}(z_T)$ by Lemma~\ref{lem:gap}:
\begin{align*}
    \gap_{\Z, F, D}(z_T) \le D \cdot \InNorms{F(z_T) + c_T} \le \frac{\lambda D H L}{\sqrt{T}}.
\end{align*}

\section{Numerical Illustration}\label{app:experiments}
In this section, we conduct numerical experiments to illustrate and compare the performance of several algorithms: Reflected Gradient \eqref{eq:RG}, Extra Gradient \eqref{eq:EG}, Accelerated Reflected Gradient \eqref{eq:ARG}, and Fast Extra Gradient (FEG)~\citep{lee2021fast}. Among them, ARG and FEG are accelerated algorithms while RG and EG are normal algorithms.

\paragraph{Test Problem} We use a classical example (Problem 1 in \citep{malitsky_projected_2015}) which is unconstrained and the operator $F(z) = Az$ where $A$ is an $n\times n$ matrix that 
\begin{align*}
    A(i,j) = \begin{cases}
        1, &j = n+1-i > i \\
        -1,&j = n+1-i < i \\
        0, &\mathrm{otherwise}
    \end{cases}
\end{align*}
Note that $F$ is $1$-Lipschitz and its solution is the zero vector $\boldsymbol{0}$ when $n$ is even. 

\paragraph{Test Details} We run experiments using Python 3.9 on jupyter-notebook, on MacBook Air (M1, 2020) running macOS 12.5.1. Time of execution is measured using the \texttt{time} package in Python. For all tests, we take initial point to be the all-one vector $z_0 = (1,\cdots,1)$. We denote $\eta$ to be the step size and the termination criteria is the residual (operator norm) $||F(z_t)|| \le \varepsilon$.  The code can be found in the Supplementary Material. 

\paragraph{Test Results}
The results for EG and RG are shown in Figure~\ref{fig:EG-RG}. With step size $\eta = 0.4$, EG is slower than RG. This is due to the fact that EG makes two gradient calls per iteration. Even with the optimized step size $\eta = 0.7$ which gives the best performance, EG is still slower than RG for this problem. Our results are consistent with numerical results on Mathematica by \cite{malitsky_projected_2015}. 

The results for FEG and ARG are shown in Figure~\ref{fig:FEG-ARG}. With step size $\eta = 0.5$, FEG is slower than ARG. With the optimized step size $\eta = 1$, FEG is a little faster than ARG. So for this problem, the performance of FEG and ARG are comparable. We also remark that for this particular problem, both ARG and FEG are slower than EG or RG. This does not contradict with our theoretical results on worst-case convergence rate. Simple algorithms like RG and EG can be faster than accelerated methods like ARG and FEG for particular problems. This also illustrates the importance of understanding simple algorithms like RG.

\begin{figure}[ht]
    \centering
    \includegraphics[width=\textwidth]{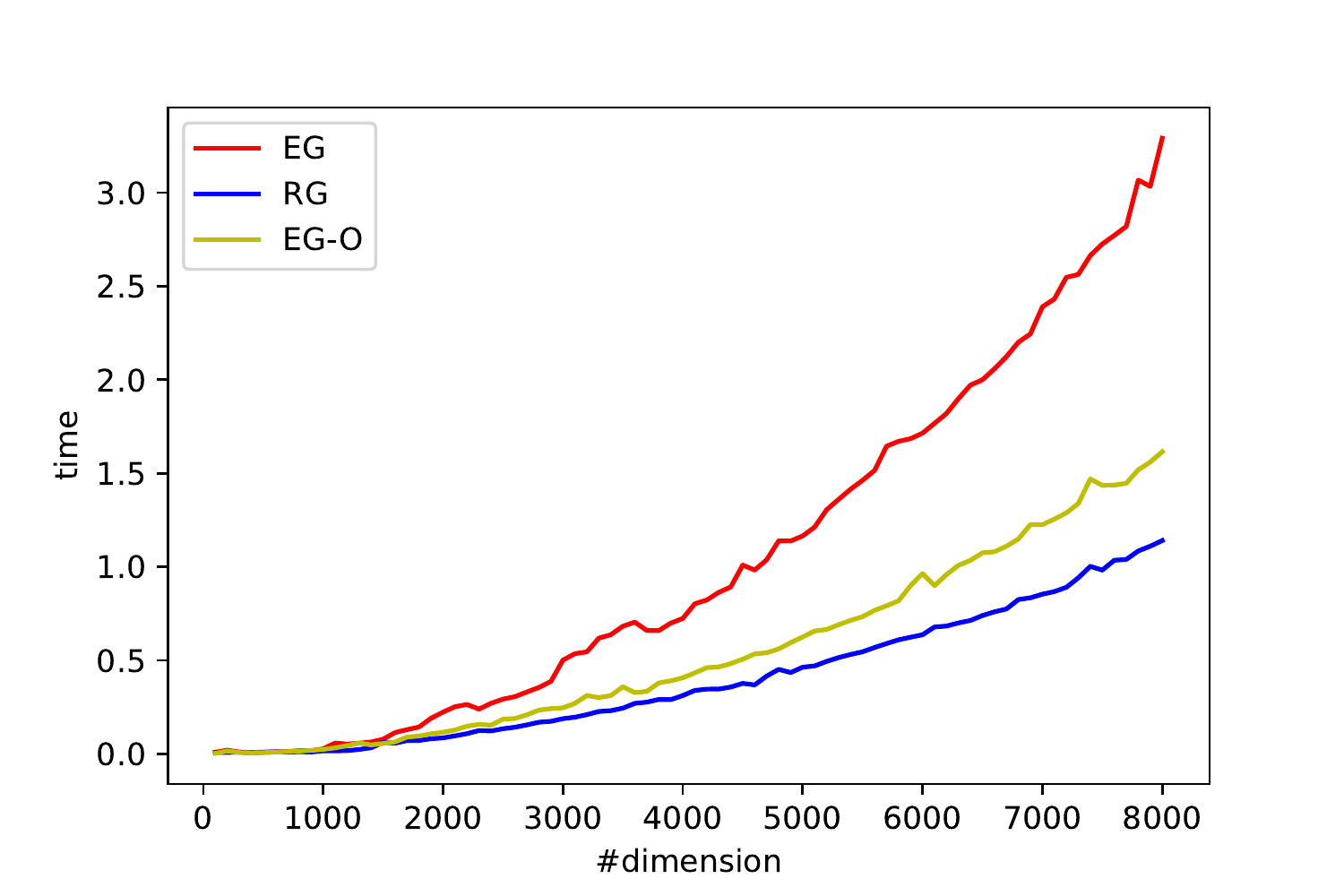}
    \caption{Results for EG and RG when $\varepsilon = 0.001$. The read line and blue line are EG and RG with step size $\eta = 0.4$. The yellow line is EG with (approximately) optimized step size $\eta = 0.7$. We remark that RG would diverge with $\eta = 0.7$.}
    \label{fig:EG-RG}
\end{figure}

\begin{figure}[ht]
    \centering
    \includegraphics[width=\textwidth]{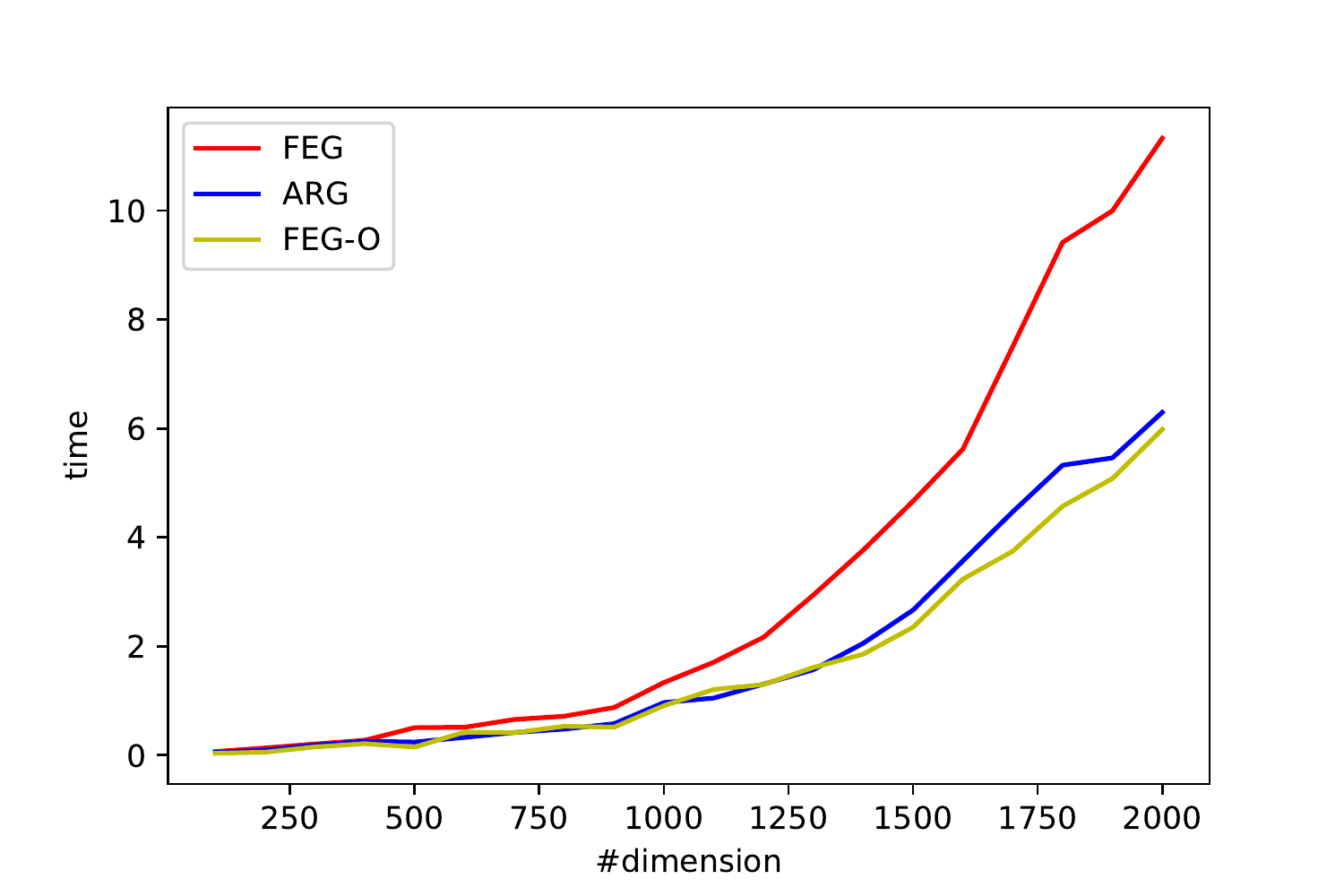}
    \caption{Results for FEG and ARG when $\varepsilon = 0.01$. The read line and blue line are FEG and ARG with step size $\eta = 0.5$. The yellow line is FEG with (approximately) optimized step size $\eta = 1$.}
    \label{fig:FEG-ARG}
\end{figure}

\section{Auxiliary Propositions}
\label{sec:auxiliary}
\begin{proposition}[Two Identities]
\label{prop:identity}
    Let $(x_k)_{k \in [4]}$, $(y_k)_{k \in [4]}$, $x_0$, $u_2$ and $u_4$ be arbitrary vectors in $\R^n$. Let $k \ge 1$ and $q \in (0,1)$ be two real numbers. If the following two equations holds:
    \begin{align*}
        x_3 &= x_2 - y_1 - u_2\\
        x_4 &= x_2 - y_3 - u_4
    \end{align*}
    then the following identity holds:
    \begin{align*}
        &\InNorms{y_2 + u_2}^2 + \InNorms{y_2 - y_1}^2 - \InNorms{y_4 + u_4}^2 - \InNorms{y_4 - y_3}^2  \\
        & - 2 \cdot \InAngles{y_4 - y_2, x_4 - x_2} \\
        & - 2 \cdot \InParentheses{\frac{1}{4} \cdot \InNorms{x_4 - x_3}^2 - \InNorms{y_4 - y_3}^2} \\
        & - 2 \cdot \InAngles{u_4 - u_2, x_4 - x_2} \\
        =& \InNorms{\frac{x_3 - x_4}{2} + y_1 - y_2}^2 + \InNorms{\frac{x_3 + x_4}{2} - x_2 + y_2 + u_2}^2
    \end{align*}
    
    If the following two equations holds:
    \begin{align*}
        x_3 &= x_2 - y_1 - u_2 + \frac{1}{k+1}(x_0 - x_2)\\
        x_4 &= x_2 - y_3 - u_4 + \frac{1}{k+1}(x_0 - x_2)
    \end{align*}
    then the following identity holds:
    \begin{align*}
        &\frac{k(k+1)}{2} \InParentheses{\InNorms{y_2+u_2}^2 + \InNorms{y_2 - y_1}^2} + k \InAngles{y_2 + u_2, x_2 - x_0} \\
        & - \frac{(k+1)(k+2)}{2} \InParentheses{\InNorms{y_4+u_4}^2 + \InNorms{y_4 - y_3}^2} - (k+1) \InAngles{y_4 + u_4, x_4 - x_0}\\
        & - k(k+1) \cdot \InAngles{y_4 + u_4 - y_2 - u_2, x_4 - x_2} \\
        & - \frac{k(k+1)}{4q} \cdot \InAngles{q \cdot \InNorms{x_4 - x_3}^2 - \InNorms{y_4 - y_3}^2} \\
        =& \frac{k(k+1)}{4} \cdot \InNorms{u_4 - u_2 + y_1 - 2y_2 + y_3}^2 \\
        & + \InParentheses{ \frac{(1-4q)k - 4q}{4q}(k+1) } \cdot \InNorms{y_3 - y_4}^2  \\
        & + (k+1) \cdot \InAngles{y_3 - y_4, y_4 + u_4}
    \end{align*}
\end{proposition}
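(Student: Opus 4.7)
The proposition asserts two purely algebraic identities between quadratic expressions in finitely many vectors of $\R^n$, subject to linear constraints that specify $x_3$ and $x_4$ in terms of the remaining data. There is no inequality or analytic content, so my plan is a direct verification: substitute the constraints into both sides, expand all squared norms and inner products, and match the resulting polynomials term by term.

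For the first identity, the key preliminary simplifications are $x_4 - x_2 = -y_3 - u_4$, $x_3 - x_2 = -y_1 - u_2$, and $x_4 - x_3 = (y_1 - y_3) + (u_2 - u_4)$. The right-hand side admits a clean collapse: direct substitution yields
$$\frac{x_3 - x_4}{2} + y_1 - y_2 = \frac{y_1 + y_3}{2} - y_2 + \frac{u_4 - u_2}{2}$$
and
$$\frac{x_3 + x_4}{2} - x_2 + y_2 + u_2 = y_2 - \frac{y_1 + y_3}{2} + \frac{u_2 - u_4}{2},$$
which are exact negatives of one another, so the entire RHS equals $2\,\bigl\|\tfrac{y_1+y_3}{2} - y_2 + \tfrac{u_4-u_2}{2}\bigr\|^2$. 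On the LHS, the term $-2\cdot(\tfrac14\|x_4-x_3\|^2 - \|y_4-y_3\|^2)$ flips the sign of the $\|y_4-y_3\|^2$ contribution, and then all $y_4$-dependent terms cancel: the block $-\|y_4+u_4\|^2 + \|y_4-y_3\|^2 + 2\langle y_4 - y_2,\, y_3+u_4\rangle$ reduces to $-\|u_4\|^2 + \|y_3\|^2 - 2\langle y_2,\, y_3+u_4\rangle$ after expansion. Both sides then become explicit quadratic forms in $y_1, y_2, y_3, u_2, u_4$ that can be matched coefficient-by-coefficient.

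For the second identity the strategy is analogous but with an additional anchoring shift $\delta := \tfrac{1}{k+1}(x_0 - x_2)$ added to both $x_3$ and $x_4$. Since $\delta$ cancels in $x_4 - x_3$, the coefficient $\tfrac{k(k+1)}{4q} \cdot \tfrac{1}{4}\|x_4-x_3\|^2$ on the LHS produces precisely the $\tfrac{k(k+1)}{4}\|u_4 - u_2 + y_1 - 2y_2 + y_3\|^2$ piece on the RHS, with the $\tfrac{1}{4q}$ factor absorbed into the residual $\|y_3-y_4\|^2$ coefficient $\tfrac{(1-4q)k - 4q}{4q}(k+1)$. The telescoping weights $\tfrac{k(k+1)}{2}$ and $-\tfrac{(k+1)(k+2)}{2}$, together with the anchor-linear terms $k\langle y_2+u_2,\, x_2-x_0\rangle$ and $-(k+1)\langle y_4+u_4,\, x_4-x_0\rangle$, are calibrated so that the $x_0$-dependent contributions and the $\|y_4+u_4\|^2$, $\|y_4-y_3\|^2$, $\langle y_4+u_4 - y_2 - u_2,\, x_4-x_2\rangle$ blocks combine to leave exactly the $(k+1)\langle y_3 - y_4,\, y_4 + u_4\rangle$ cross term on the RHS.

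The main obstacle is not conceptual but careful algebraic bookkeeping: the second identity mixes five distinct weight patterns ($\tfrac{k(k+1)}{2}$, $\tfrac{(k+1)(k+2)}{2}$, $k$, $k+1$, $\tfrac{k(k+1)}{4q}$), and a single off-by-one in $k$ or a missed cross term would break the equality. To control this risk I would expand each side into four disjoint groups --- pure $y$-quadratic, pure $u$-quadratic, $x_0$-linear (only for the second identity), and mixed $y$--$u$ cross terms --- and verify matching group by group, using the first identity as a degenerate-limit sanity check for the leading quadratic terms in $k$.
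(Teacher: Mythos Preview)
Your approach is correct and is in fact more informative than what the paper does: the paper's entire proof of this proposition is a one-line appeal to symbolic computer verification in {\sc Matlab}, with no algebraic details given. Your proposal instead carries out the substitution and expansion by hand, and the key structural observations you make are right --- in particular, for the first identity the two RHS vectors are indeed exact negatives of one another after substituting the constraints, and the $y_4$-dependent terms on the LHS do cancel exactly as you describe. Your hand computation exposes \emph{why} the identity holds (e.g.\ that the RHS collapses to $2\|\cdot\|^2$ of a single vector, and that $y_4$ disappears from the LHS), whereas the paper's computer check certifies correctness without insight. The trade-off is that the paper's route is immune to bookkeeping slips, which --- as you yourself flag --- are the main risk in the second identity with its five interacting weight patterns; if you complete the hand verification you will want an independent check (even a short script) on the final coefficient match.
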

\begin{proof}
We verify the two identities by {\sc Matlab}. The code is available at \\ \url{https://github.com/weiqiangzheng1999/Single-Call}.
\end{proof}

\begin{proposition}[\citep{cai2022accelerated}]
\label{prop:sequence analysis}
Let $\{a_k \in \R^{+}\}_{k\ge 2}$ be a sequence of real numbers. Let $C_1 \ge 0$ and $p \in (0, \frac{1}{3})$ be two real numbers. If the following condition holds for every $k\geq 2$,
\begin{align}\label{eq:sequence condition}
    \frac{k^2}{4} \cdot a_k \le C_1 + \frac{p}{1-p} \cdot \sum_{t=2}^{k-1} a_t,
\end{align}
then for each $k\geq 2$ we have 
\begin{align}\label{eq:induction assumption}
    a_k \le \frac{4\cdot C_1}{1-3p} \cdot \frac{1}{k^2}.
\end{align}

\end{proposition}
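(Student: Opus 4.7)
The plan is to argue by strong induction on $k \ge 2$. The base case is immediate: the sum on the right-hand side of the hypothesis is empty, so $a_2 \le C_1 \le C_1/(1-3p)$, which matches the stated bound since $p < 1/3$.

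For the inductive step I would \emph{not} push the pointwise bound $a_t \le 4C_1/[(1-3p)t^2]$ through the sum directly: that approach leaves an inequality requiring $\sum_{t \ge 2} 1/t^2 \le 3(1-p)/4$, which fails once $p$ exceeds roughly $0.14$ because $\sum_{t \ge 2} 1/t^2 = \pi^2/6 - 1 \approx 0.645$. Instead, I would track the partial sums $S_k := \sum_{t=2}^{k-1} a_t$ via a telescoping auxiliary. Write $q = p/(1-p)$ and set $U_k := S_k + C_1/q$. The hypothesis rewrites as $a_k \le (4/k^2)(C_1 + q S_k) = (4q/k^2)\,U_k$, and a direct algebraic simplification gives the clean multiplicative recursion
\[
U_{k+1} \le U_k\left(1 + \frac{4q}{k^2}\right), \qquad U_2 = C_1/q,
\]
which telescopes to $U_k \le (C_1/q)\prod_{j=2}^{k-1}(1 + 4q/j^2)$. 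Substituting back produces
\[
\frac{k^2}{4}\, a_k \;\le\; C_1 + q S_k \;=\; q\, U_k \;\le\; C_1 \prod_{j=2}^{k-1}\left(1 + \frac{4q}{j^2}\right).
\]

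It then suffices to verify the scalar bound $\prod_{j=2}^{\infty}(1 + 4q/j^2) \le 1/(1-3p)$ for every $p \in (0, 1/3)$. Using $\log(1+x) \le x$ together with $\sum_{j \ge 2} 1/j^2 = \pi^2/6 - 1$, this reduces to proving
\[
\frac{4p(\pi^2/6-1)}{1-p} \;\le\; -\log(1-3p), \qquad p \in (0, 1/3),
\]
which I would establish by letting $f(p)$ denote the difference of the two sides, checking $f(0)=0$, and verifying $f'(p) \ge 0$: after clearing denominators the derivative inequality becomes a quadratic in $p$ whose discriminant, $3\alpha(3\alpha-8)$ with $\alpha = 4(\pi^2/6-1)\approx 2.58$, is negative and whose value at $p=0$ is positive, so the quadratic is non-negative on all of $\mathbb{R}$.

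The main obstacle is precisely this closing one-variable inequality. The naive pointwise induction loses too much slack when $p$ is close to $1/3$ because the Basel tail $\pi^2/6-1 \approx 0.645$ is only comparable to $3(1-p)/4$ when $p$ is small; the telescoping substitution captures the amplification multiplicatively via $\prod_{j \ge 2}(1 + 4q/j^2)$, which stays bounded even as $p \to 1/3$ while the target constant $1/(1-3p)$ diverges, leaving just enough slack to close the argument via the scalar calculus check above.
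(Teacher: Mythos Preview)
Your argument is correct, but it takes a genuinely different route from the paper's proof. You rightly observe that the naive pointwise induction starting the tail at $t=2$ would require $\sum_{t\ge 2} 1/t^2 \le 3(1-p)/4$, which fails for $p$ close to $1/3$. Your remedy is to track the running sum $S_k$ via the multiplicative recursion $U_{k+1}\le U_k(1+4q/k^2)$ and then close with a one-variable calculus inequality comparing $\prod_{j\ge 2}(1+4q/j^2)$ to $1/(1-3p)$; the discriminant check on the resulting quadratic is fine.

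The paper, however, avoids all of this with a one-line trick: it runs the very pointwise induction you dismissed, but peels off the $t=2$ term using the \emph{exact} base-case bound $a_2\le C_1$ rather than the (looser) inductive bound $a_2\le C_1/(1-3p)$. Concretely,
\[
\frac{k^2}{4}a_k \;\le\; C_1+\frac{p}{1-p}a_2+\frac{p}{1-p}\sum_{t=3}^{k-1}a_t \;\le\; \frac{C_1}{1-p}+\frac{4pC_1}{(1-p)(1-3p)}\sum_{t\ge 3}\frac{1}{t^2},
\]
and now the relevant tail is $\sum_{t\ge 3}1/t^2=\pi^2/6-5/4\le 1/2$ instead of $\pi^2/6-1\approx 0.645$. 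With that, the right-hand side simplifies \emph{exactly} to $C_1/(1-3p)$, closing the induction for all $p\in(0,1/3)$ without any auxiliary product or calculus. So the obstacle you identified is real for the tail from $t=2$, but the paper sidesteps it by starting the inductive tail at $t=3$; your telescoping approach works too and is more robust (it would survive weaker numerics on the Basel tail), but it is considerably heavier than what the statement actually requires.
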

\begin{proof}
We prove the statement by induction.

\noindent{\bf Base Case: $k = 2$. } From Inequality~\eqref{eq:sequence condition}, we have 
\begin{align*}
    \frac{2^2}{4} \cdot a_2 \le C_1\quad \Rightarrow \quad a_2 \le C_1 \le \frac{4\cdot C_1}{1-3p} \cdot \frac{1}{2^2}.
\end{align*}
Thus, Inequality~\eqref{eq:induction assumption} holds for $k= 2$. 

\noindent{\bf Inductive Step: for any $k \ge 3$. } 
Fix some $k \ge 3$ and assume that Inequality~\eqref{eq:induction assumption} holds for all $2 \le t \le k -1$. We slightly abuse notation and treat the summation in the form $\sum_{t=3}^{2}$ as 0.
By Inequality~\eqref{eq:sequence condition}, we have
\begin{align*}
\frac{k^2}{4}\cdot a_k &\le C_1 + \frac{p}{1-p} \cdot \sum_{t = 2}^{k-1} a_t \\ 
&\le \frac{C_1}{1-p} +  \frac{p}{1-p} \cdot \sum_{t = 3}^{k-1} a_t \tag{$a_2 \le C_1$}\\
& \le \frac{C_1}{1-p} +  \frac{4p\cdot C_1 }{(1-p)(1-3p)} \cdot \sum_{t = 3}^{k-1} \frac{1}{t^2} \tag{Induction assumption on Inequality~\eqref{eq:induction assumption}}\\
& \le \frac{C_1}{1-p} +  \frac{2p\cdot C_1 }{(1-p)(1-3p)} \tag{$\sum_{t=3}^\infty \frac{1}{t^2} = \frac{\pi^2}{6}-\frac{5}{4} \le \frac{1}{2}$} \\
& = \frac{C_1}{1-3p}.
\end{align*}
This complete the inductive step. Therefore, for all $k\ge 2$, we have $a_k \le \frac{4\cdot C_1}{1-3p} \cdot \frac{1}{k^2}$.
\end{proof}

\end{document}